\definecolor{b}{rgb}{0,0,1}                         
\theoremstyle{plain}
\newtheorem{theorem}{Theorem}
\theoremstyle{definition}
\newtheorem{remark}{Remark}
\journal{Neurocomputing}
\begin{document}

\begin{frontmatter}



\title{Convolutional neural networks with fractional order gradient method}
\phantomsection
\addcontentsline{toc}{title}{Convolutional neural networks with fractional order gradient method}


\author{Dian Sheng}
\author{Yiheng Wei}
\author{Yuquan Chen}
\author{Yong Wang\corref{cor1}}
\ead{yongwang@ustc.edu.cn}
\cortext[cor1]{Yong Wang is the corresponding author. }

\address{Department of Automation, University of Science and Technology of China, Hefei, 230026, China}

\begin{abstract}
This paper proposes a fractional order gradient method for the backward propagation of convolutional neural networks. To overcome the problem that fractional order gradient method cannot converge to real extreme point, a simplified fractional order gradient method is designed based on Caputo's definition. The parameters within layers are updated by the designed gradient method, but the propagations between layers still use integer order gradients, and thus the complicated derivatives of composite functions are avoided and the chain rule will be kept. By connecting every layers in series and adding loss functions, the proposed convolutional neural networks can be trained smoothly according to various tasks. {\color{b}Some practical experiments are carried out in order to demonstrate fast convergence, high accuracy and ability to escape local optimal point at last.}
\end{abstract}

\begin{keyword}
Fractional order calculus; Gradient method; Neural networks; Backward propagation
\end{keyword}

\end{frontmatter}


\section{Introduction}\label{Section 1}
Machine learning technology powers many aspects of modern society: from web searches to content filtering on social networks to recommendations on e-commerce websites, and it is increasingly presented in consumer products such as cameras and smart phones \cite{LeCun:2015Nature}. {\color{b} During the development of machine learning, a variety of artificial neural networks have been created and gradually playing a more and more important role. Although it is still far from perfection, artificial neural networks are proven to be excellent enough, especially for convolutional neural networks (CNN). Research on CNN can be traced back to $1979$. Based on visual cortex, Fukushima designed the neural networks named 'neocognitron' which is regarded as the origin of CNN \cite{Fukushima:1980BC}. As a breakthrough has been made for the back propagation neural networks (BPNN) \cite{Rumelhart:1988Book}, the first CNN was invented by applying the backward propagation in training model \cite{Alexander1990RSR}. After the emergence of two-dimensional CNN, scholars proposed a series of networks such as LeNet \cite{LeCun:1989NC,LeCun:1998IEEE}, AlexNet \cite{Krizhevsky:2012imagenet}, VGG \cite{Simonyan:2015ICLP}, GoogLeNet\cite{Szegedy:2015CVPR} and ResNet \cite{He:2016CVPR}.  However, no matter how deep or large a neural network is, the key of algorithm is gradient method in backward propagation.}

As fractional order calculus is successfully applied in LMS filtering \cite{Raja:2015SP,Cheng:2017SP,Yin:2019MSSP}, systems identification \cite{Cheng:2018SP,Cui:2018ISAT}, control theories \cite{Li:2009Automatica,Lu:2010TAC,Yin:2014Automatica,Wei:2017JOTA} and so on, there arises a new trend that introduces fractional order calculus into gradient method. Professor Pu is the first one who pays attention to fractional order gradient method. He adopts fractional order derivatives to replace the integer order derivatives in traditional gradient method directly \cite{Pu:2015TNNLS}. {\color{b} According to the definitions of fractional order derivatives \cite{Monje:2010Book}, not only current information but also historical one are taken into consideration, thus it is potential for fractional order gradient to escape local optimal point. Nevertheless, such gradient method cannot ensure the convergence to real extreme point, even if the objective function is a simple quadratic function.} To remedy this congenital defect, Chen uses truncation and short memory principle to modify the fractional order gradient method \cite{Chen:2017AMC,Chen:2018ACC} {\color{b} which turns out: it is convergent to real extreme point just as integer order method do, but with faster convergent speed than integer order method.}

During the research of fractional order gradient method, some scholars have found its application to artificial neural networks at the same time. Considering that fractional order derivatives of composite functions are complicated, professor Wang only uses fractional order gradients for updating parameters so that the chain rule will be kept to calculate integer order gradients along backward propagation \cite{Wang:2017NN}. Similar method is followed but the different structure of networks is applied in \cite{Bao:2018CIN}. {\color{b} Their experiments demonstrate that fractional order gradients improve the networks performance on accuracy, and the cost of time has changed little because of relatively simpler calculation by adopting Caputo's definition.
} However, their fractional order gradient method is based on the strict definition of fractional order derivatives, which leads to the same problem as \cite{Pu:2015TNNLS}.

Even if great efforts have been made to neural networks with fractional order gradient method, it is still a novel research and far away from perfection at present. There remain some aspects to be improved.
\begin{itemize}
\item The convergence to real extreme point is necessary for gradient method.
\item The available range of fractional order can be extended to $0 < \alpha < 2$.
\item Neural networks of more complicated structure are worth researching in depth.
\item How to use the chain rule in fractional order neural networks is still a problem.
\item Loss function may be chosen as not only quadratic function but cross-entropy function.
\end{itemize}

Therefore, this paper provides conventional CNN with a novel fractional order gradient method. To the best of our knowledge, no scholar has ever investigated the CNN by fractional order gradient method. The proposed method is creative for neural networks as well as gradient method. First, based on the Caputo's definition of fractional order derivatives, a fractional order gradient method is designed and proved to converge to real extreme point. Second, the gradients in backward propagation of neural networks are divided into two categories, namely the gradients transferred between layers and the gradients for updating parameters within layers. Third, the updating gradients are replaced by fractional order one, but transferring gradients are integer order so that the chain rule could be kept using. With connecting all layers end-to-end and adding loss functions, the CNN with fractional order gradient method is achieved. {\color{b} What's more, the proposed neural networks validate that fractional order gradients perform outstanding acts of fast convergence, good accuracy and ability to escape local optimal point.}

The remainder of this article is organized as follows. Section \ref{Section 2} introduces a fractional order gradient method and provides some basic knowledge for subsequent use.  Fractional order gradient method is recommended for the fully connected layers and convolution layers in Section \ref{Section 3}, respectively. In Section \ref{Section 4},  some experiments are provided to illustrate the validity of the proposed approach. Conclusions are given in Section \ref{Section 5}.

\section {Preliminaries}\label{Section 2}
{\color{b} A general CNN is composed of convolution layers, pooling layers and fully connected layers. Fractional order gradient method is applicable for all layers except pooling layers, since there is no need of updating parameters in pooling layers. In the fully connected layers, each node is connected to all nodes of last layer, whereas the convolution layers use many convolution kernels to scan the input. In spite of different structures, their backward propagations are almost the same, which make the study on gradient method much easier. Before the introduction of fractional order gradients within backward propagations, some preliminary knowledge needs emphasizing here.}

There are some widely accepted definitions of fractional order derivative ${{\mathscr D}^{{\alpha}}}$, such as Riemann-Liouville, Caputo and Grunwald-Letnikov, but the Caputo's one is chosen for subsequent use, since its derivative of constant equals zero. The Caputo's definition is
\begin{eqnarray}\label{Eq2.1}
\textstyle
{}_{t_0}^{}{\mathscr D}_t^\alpha f\left( t \right) = \frac{1}{{\Gamma \left( {m - \alpha } \right)}}\int_{t_0}^t {\frac{{{f^{\left( m \right)}}\left( \tau  \right)}}{{{{\left( {t - \tau } \right)}^{\alpha  - m + 1}}}}{\rm{d}}\tau },
\end{eqnarray}
where $m-1< \alpha<m, m\in \mathbb{N}^+$, $\Gamma \left( \alpha  \right) = \int_0^\infty  {{x^{\alpha  - 1}}{{\rm e}^{ - x}}{\rm{d}}x} $ is the Gamma function, $t_0$ is the initial value. Alternatively, (\ref{Eq2.1}) can be rewritten as the following form
\begin{eqnarray}\label{Eq2.2}
~{}_{t_{0}}^{}{\mathscr D}_{t}^\alpha f(t) = \sum\limits_{i = m}^\infty  {\frac{{{f^{(i)}}({t_{0}})}}{{\Gamma (i + 1 - \alpha )}}{{({t} - {t_{0}})}^{i - \alpha }}}.
\end{eqnarray}

Suppose $f(x)$ to be a smooth convex function with a unique extreme point $x^*$. It is well known that each iterative step of the conventional gradient method is formulated as
\begin{eqnarray}\label{Eq2.3}
x_{K+1} = x_{K} - \mu ~f^{(1)}(x_{K}),
\end{eqnarray}
where $\mu$ is the iterative step size or learning rate, $K$ is iterative times. Similarly, the fractional order gradient method is written as
\begin{eqnarray}\label{Eq2.4}
x_{K+1} = x_K - \mu ~{}_{x_0}^{}{\mathscr D}_{x_K}^\alpha f(x).
\end{eqnarray}
If fractional order derivatives are directly applied in (\ref{Eq2.4}), the above fractional order gradient method cannot converge to the real extreme point $x^*$, but to an extreme point under definition of fractional order derivatives, such extreme point is associated with initial value and order, generally not equal to $x^*$ \cite{Pu:2015TNNLS}.

To guarantee the convergence to real extreme point, an alternative fractional order gradient method \cite{Chen:2017AMC} is considered via following iterative step
\begin{eqnarray}\label{Eq2.5}
x_{K+1} = x_{K} - \mu ~{}_{x_{K-1}}^{}{\mathscr D}_{x_{K}}^\alpha f(x),
\end{eqnarray}
with $0 < \alpha < 1$ and
\begin{eqnarray}\label{Eq2.6}
~{}_{x_{K-1}}^{}{\mathscr D}_{x_{K}}^\alpha f(x) = \sum\limits_{i = 0}^\infty  {\frac{{{f^{(i + 1)}}({x_{K-1}})}}{{\Gamma (i + 2 - \alpha )}}{{({x_{K}} - {x_{K-1}})}^{i + 1 - \alpha }}}.
\end{eqnarray}
When only the first item is reserved and its absolute value is introduced, the fractional order gradient method with $0<\alpha<2$ is simplified as
\begin{eqnarray}\label{Eq2.7}
x_{K+1} = x_{K} - \mu ~ \frac{f^{(1)}(x_{K-1})}{\Gamma(2-\alpha)}|x_{K} - x_{K-1}|^{1-\alpha}.
\end{eqnarray}

\begin{theorem}\label{Theorem1}
If fractional order gradient method (\ref{Eq2.7}) is convergent, it will converge to the real extreme point $x^*$.
\end{theorem}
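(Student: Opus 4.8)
The plan is to argue by contradiction, exploiting the fact that along a convergent sequence the successive increments must vanish while the index‑shifted gradient $f^{(1)}(x_{K-1})$ settles to $f^{(1)}(\bar{x})$. Suppose the sequence $\{x_K\}$ produced by (\ref{Eq2.7}) converges to a limit $\bar{x}$. Since $f$ is smooth and convex with the unique extreme point $x^*$ (and a stationary point of a convex function is its minimizer), it suffices to show $f^{(1)}(\bar{x})=0$, for then $\bar{x}=x^*$. So assume for contradiction that $f^{(1)}(\bar{x})\neq 0$. Continuity of $f^{(1)}$ together with $x_K\to\bar{x}$ yields a constant $c>0$ and an index $N$ with $|f^{(1)}(x_{K-1})|\ge c$ for all $K\ge N$. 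Writing $\delta_K:=x_{K+1}-x_K$, convergence forces $\delta_K\to 0$, while (\ref{Eq2.7}) gives
\[
|\delta_K|=\frac{\mu}{\Gamma(2-\alpha)}\,\bigl|f^{(1)}(x_{K-1})\bigr|\,|\delta_{K-1}|^{\,1-\alpha}\ge\beta\,|\delta_{K-1}|^{\,1-\alpha},\qquad \beta:=\frac{\mu c}{\Gamma(2-\alpha)}>0,
\]
for every $K\ge N$, where we use the standing hypothesis that the iteration is well defined and non‑degenerate, i.e. $\delta_K\neq 0$, so that no negative power of zero is formed and all powers below are legitimate.

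The next step is to show this recursive lower bound is incompatible with $\delta_K\to 0$, splitting on the sign of $1-\alpha$. If $1\le\alpha<2$, then $1-\alpha\le 0$; since $|\delta_{K-1}|\to 0$ the factor $|\delta_{K-1}|^{1-\alpha}$ is eventually $\ge 1$ (it equals $1$ when $\alpha=1$ and tends to $+\infty$ when $\alpha>1$), hence $|\delta_K|\ge\beta$ for all large $K$, contradicting $\delta_K\to 0$. If $0<\alpha<1$, then $0<1-\alpha<1$ and merely passing to the limit in (\ref{Eq2.7}) only yields $0=0$, so the substantive move is to iterate the displayed inequality: for $j\ge 1$,
\[
|\delta_{N+j}|\ge\beta^{\,(1-(1-\alpha)^{j})/\alpha}\,|\delta_{N}|^{\,(1-\alpha)^{j}}.
\]
As $j\to\infty$ we have $(1-\alpha)^{j}\to 0$, so the right‑hand side tends to $\beta^{1/\alpha}>0$; therefore $\liminf_{K}|\delta_K|\ge\beta^{1/\alpha}>0$, again contradicting $\delta_K\to 0$. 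In either case we obtain a contradiction, so $f^{(1)}(\bar{x})=0$ and consequently $\bar{x}=x^*$.

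I expect the genuinely delicate part to be precisely the regime $0<\alpha<1$: there the one‑step relation carries no information in the limit, and one really needs the bootstrap estimate above to see that the increments cannot decay to zero unless the shifted derivative does. A secondary technical point that should be stated explicitly is the non‑degeneracy hypothesis that consecutive iterates do not coincide before a stationary point is reached; this guarantees that $|\delta_{K-1}|^{1-\alpha}$ is meaningful for $\alpha>1$ and that $|\delta_N|>0$ in the bootstrap, and although it is immaterial in practice it must be acknowledged for the statement ``if convergent, then convergent to $x^*$'' to be literally correct.
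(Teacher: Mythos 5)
Your proof is correct, and it shares the paper's overall skeleton (contradiction, a uniform lower bound $|f^{(1)}(x_{K-1})|\ge c>0$ near the false limit, and the one-step inequality $|\delta_K|\ge\beta|\delta_{K-1}|^{1-\alpha}$), but the way you extract the contradiction is genuinely different. The paper finishes with a single trick valid for all $0<\alpha<2$: it chooses $\varepsilon$ so small that $2\varepsilon<d^{1/\alpha}$, whence $d>|x_K-x_{K-1}|^{\alpha}$ and therefore $|x_{K+1}-x_K|>|x_K-x_{K-1}|$, i.e.\ the increments are strictly increasing, which is incompatible with convergence. You instead split on the sign of $1-\alpha$: for $1\le\alpha<2$ the factor $|\delta_{K-1}|^{1-\alpha}\ge 1$ eventually, and for $0<\alpha<1$ you iterate the inequality to get $|\delta_{N+j}|\ge\beta^{(1-(1-\alpha)^j)/\alpha}|\delta_N|^{(1-\alpha)^j}\to\beta^{1/\alpha}>0$. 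Both routes are sound; the paper's is shorter and uniform in $\alpha$, while your bootstrap yields an explicit quantitative lower bound $\beta^{1/\alpha}$ on the increments, avoids the slightly circular step in the paper where $\varepsilon$ is re-chosen after $d$ (which depends on $N$, hence on $\varepsilon$; the paper's fix is that $\delta$ is monotone in $\varepsilon$ and bounded below by a quantity depending only on $X$), and phrases the contradiction via $f^{(1)}(\bar x)\ne 0$ rather than $\bar x\ne x^*$, which is equivalent under the convexity and uniqueness assumptions. You also make explicit the non-degeneracy requirement ($x_K\ne x_{K-1}$ before stationarity), a caveat the paper needs as well but leaves implicit: if two consecutive iterates ever coincide with $0<\alpha<1$, the iteration freezes there regardless of the gradient, so the theorem as literally stated needs exactly the hypothesis you acknowledge.
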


\begin{proof}
It is a proof by contradiction. Assume that $x_K$ converges to a different point $X  \ne x^*$, namely $\mathop {\lim}\limits_{K \to \infty} |x_K - X| = 0$. Therefore, it can be concluded that for any sufficient small positive scalar $\varepsilon$, there exists a sufficient large number $N \in \mathbb{N}$ such that $|x_{K-1}-X|<\varepsilon<|x^*-X|$ for any $K-1>N$. Then {\color{b} $\delta  = \mathop {\inf }\limits_{K - 1 > N} |{f^{(1)}}({x_{K-1}})| > 0$ } must hold.

\noindent According to (\ref{Eq2.7}), the following inequality is obtained
\begin{eqnarray}\label{Eq2.8}
\begin{array}{rl}
|{x_{K + 1}} - {x_K}| = & \hspace{-8pt} \left| {\mu \frac{{{f^{(1)}}({x_{K - 1}})}}{{\Gamma (2 - \alpha )}}|{x_K} - {x_{K - 1}}{|^{1 - \alpha }}} \right|\\
 = & \hspace{-8pt} \mu \frac{{|{f^{(1)}}({x_{K - 1}})|}}{{\Gamma (2 - \alpha )}}|{x_K} - {x_{K - 1}}{|^{1 - \alpha }}\\
 \ge & \hspace{-8pt} d|{x_K} - {x_{K - 1}}{|^{1 - \alpha }},
\end{array}
\end{eqnarray}
with $d = \mu \frac{\delta}{\Gamma(2-\alpha)}$.

\noindent Considering that one can always find a $\varepsilon$ such that $2\varepsilon < d^{\frac{1}{\alpha}}$, then the following inequality will hold
\begin{eqnarray}\label{Eq2.9}
|x_K - x_{K-1}| \le |x_K - X| + |x_{K-1} - X| < 2\varepsilon < d^{\frac{1}{\alpha}}.
\end{eqnarray}
The above inequality could be rewritten as $d > |x_K - x_{K-1}|^\alpha$. When this inequality is introduced into (\ref{Eq2.8}), the result is
\begin{eqnarray}\label{Eq2.10}
|{x_{K + 1}} - {x_K}| > |x_K - x_{K-1}|,
\end{eqnarray}
which implies that ${x_K}$ is not convergent. It contradicts to the assumption that ${x_K}$ is convergent to $X$, thus the proof is completed.
\end{proof}

\begin{remark}\label{Remark1}
When a small positive value $\delta > 0$ is introduced, the following fractional order gradient method will avoid singularity caused by $x_K=x_{K-1}$.
\begin{eqnarray}\label{Eq2.11}
x_{K+1} = x_{K} - \mu ~ \frac{f^{(1)}(x_{K-1})}{\Gamma(2-\alpha)}(|x_{K} - x_{K-1}| + \delta)^{1-\alpha}.
\end{eqnarray}
\end{remark}

{\color{b} Compared with gradient method based on strict definition of fractional order derivatives \cite{Pu:2015TNNLS}, the modified fractional order gradient methods (\ref{Eq2.7}) and other similar methods \cite{Chen:2017AMC} are proven to be convergent to the real extreme point. To demonstrate faster convergence of proposed gradient method, a common type of objective function, namely quadratic function, is selected to show optimizing process.

The objective function is $f(x)=(x-3)^2$ where the step size is set to $0.1$ and two initial points are randomly chosen in $[0, 0.1]$. Other initializations are completely available only if the distance between two points is restricted to a small range. As is shown by Fig. \ref{Fig1}, the optimizing process is obviously promoted with the application of fractional order gradient method.}

\begin{figure}[htbp]
  \centering
  \includegraphics[width=0.9\hsize]{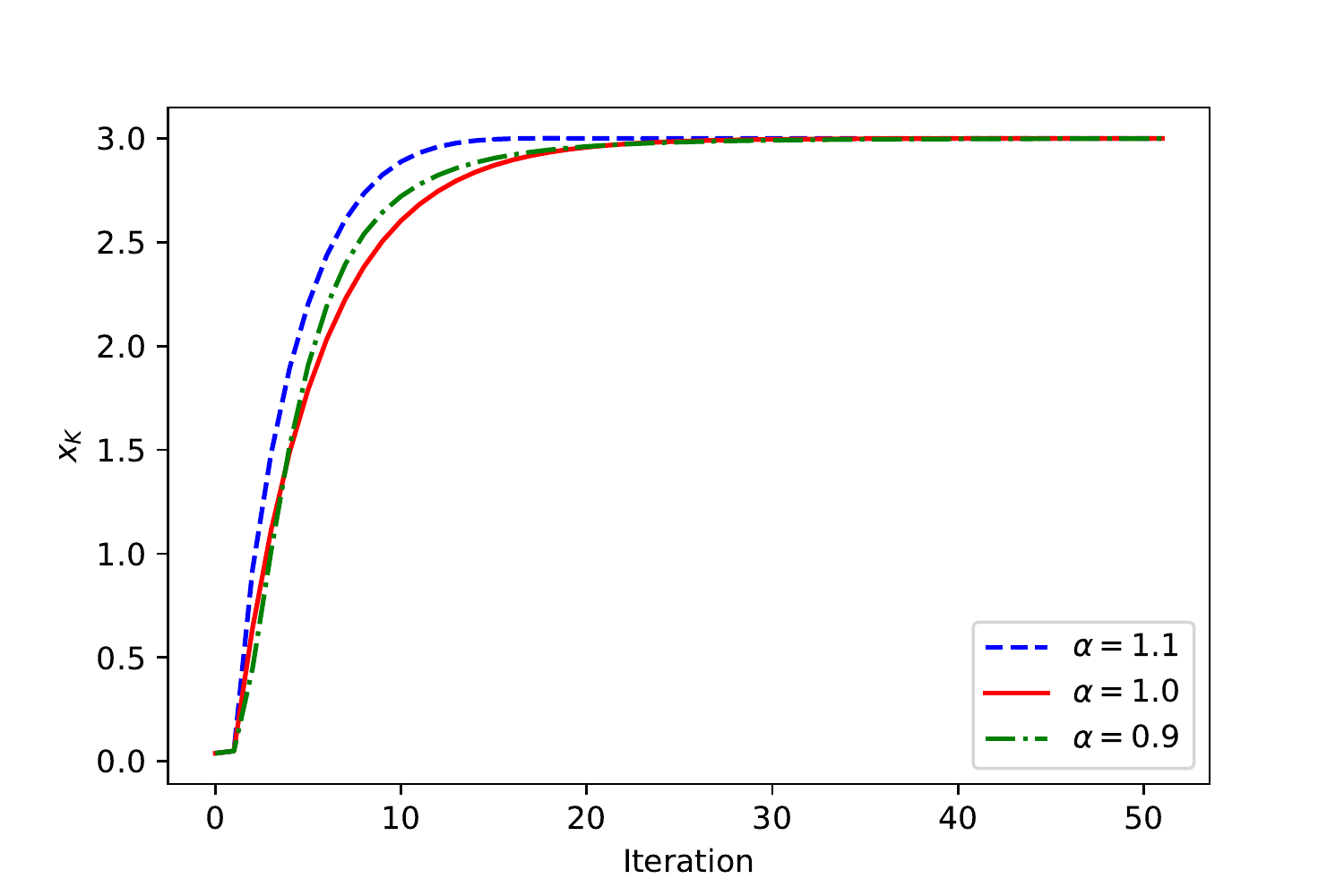}
  \caption{The performance of different gradient methods.}\label{Fig1}
\end{figure}

{\color{b} In view of theory and practice above, compared with existing integer order or fractional order gradient method, the proposed fractional order gradient method not only is convergent to real extreme point but also converges with faster speed.} 

{\color{b}\section{CNN with fractional order gradients}\label{Section 3}}
{\color{b} Although the key procedure of mathematical calculation is quite similar in convolution layers and fully connected layers, the different structures lead to different ways to research. First of all, fully connected layers with fractional order gradient are introduced.
}
\subsection{Fully connected layers}
The training procedure of neural networks contains two steps, one of them is forward propagation. Such propagation between two layers is illustrated as Fig. \ref{Fig2}, where superscript $[l]$ is the number of layer, subscript $i$ is the number of node in certain layer, $a^{[l]}_i \in \mathbb{R}$ is the output of $i$-th node in $l$-th layer.

\begin{figure}[htbp]
  \centering
  \includegraphics[width=0.45\textwidth]{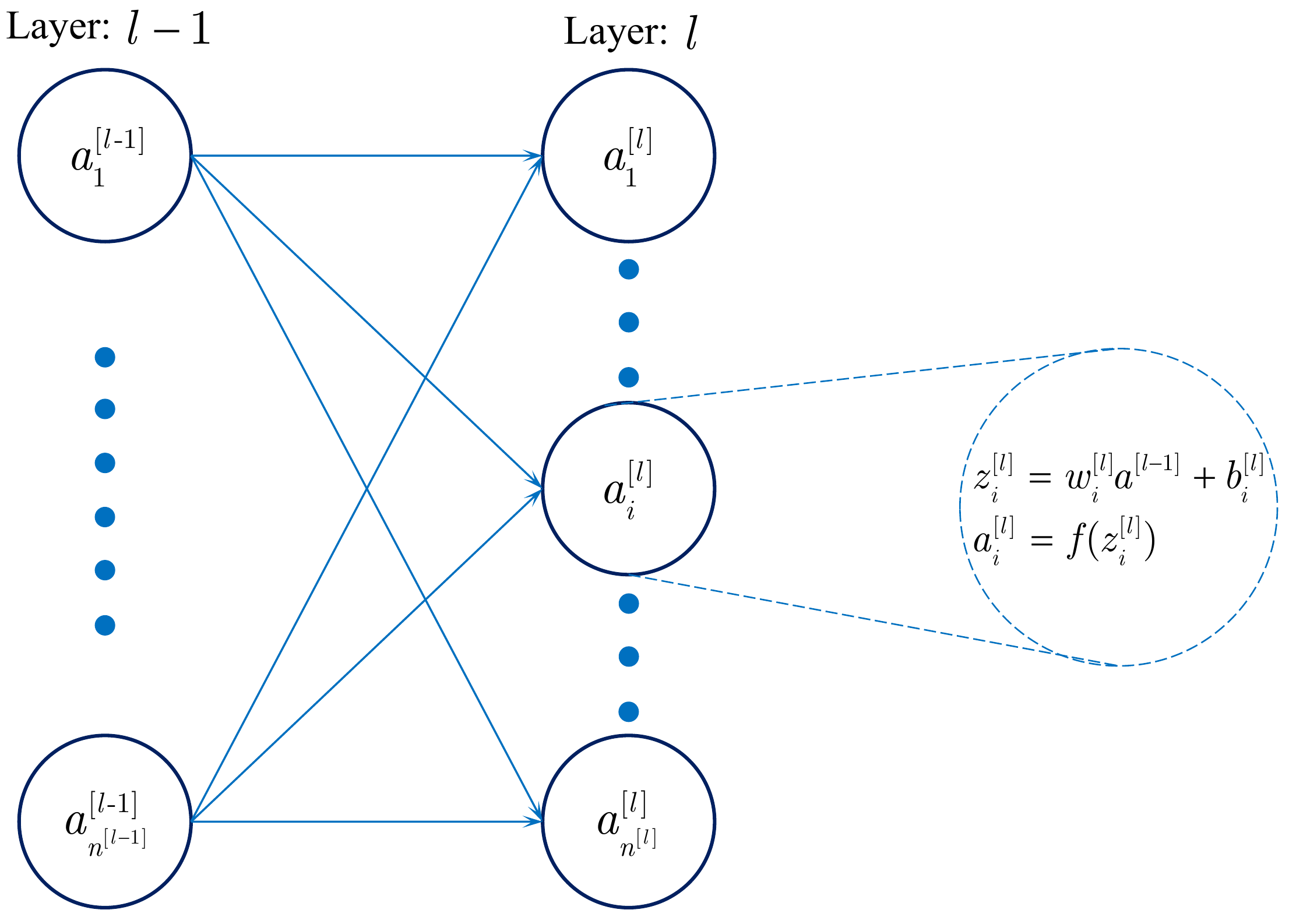}
  \caption{Forward propagation of fully connected layers.}\label{Fig2}
\end{figure}

\noindent The output $a^{[l]}_i$ is from
\begin{eqnarray}\label{Eq3.1}
\left\{ \begin{array}{rl}
z_i^{[l]} = & \hspace{-8pt} w_i^{[l]}{a^{[l - 1]}} + b_i^{[l]},\\
a_i^{[l]} = & \hspace{-8pt} f(z_i^{[l]}),
\end{array} \right.
\end{eqnarray}
where $w^{[l]}_i = [w^{[l]}_{i1}, w^{[l]}_{i2}, \cdots, w^{[l]}_{in^{[l-1]}}] \in \mathbb{R}^{n^{[l-1]}}$ is weight, $b^{[l]}_i \in \mathbb{R}$ is bias, $a^{[l-1]} = [a^{[l-1]}_{1}, a^{[l-1]}_{2}, \cdots, a^{[l-1]}_{n^{[l-1]}}]^{\rm T} \in \mathbb{R}^{n^{[l-1]}}$ is the output of last layer and function $f(\cdot)$ is activation function.

Another step of training procedure is backward propagation, in which fractional order gradient method takes the place of traditional method. Due to imperfect use of chain rule in fractional order derivatives, the gradients of backward propagation are a blend of fractional order and integer order. As is shown in Fig \ref{Fig3}, there are two types of gradients that pass through layers. One is the transferring gradient (solid line) which links nodes between two layers, the other is updating gradient (dotted line) which is used for parameters within layers. $L$ is the loss function, $\alpha$ is the fractional order, $\frac{\partial {}^\alpha L}{\partial w_i^{[l]} {}^\alpha}$ and $\frac{\partial {}^\alpha L}{\partial b_i^{[l]} {}^\alpha}$ are defined as fractional order gradients of $w_i^{[l]}$ and $b_i^{[l]}$, respectively.

\begin{figure}[htbp]
  \centering
  \includegraphics[width=0.48\textwidth]{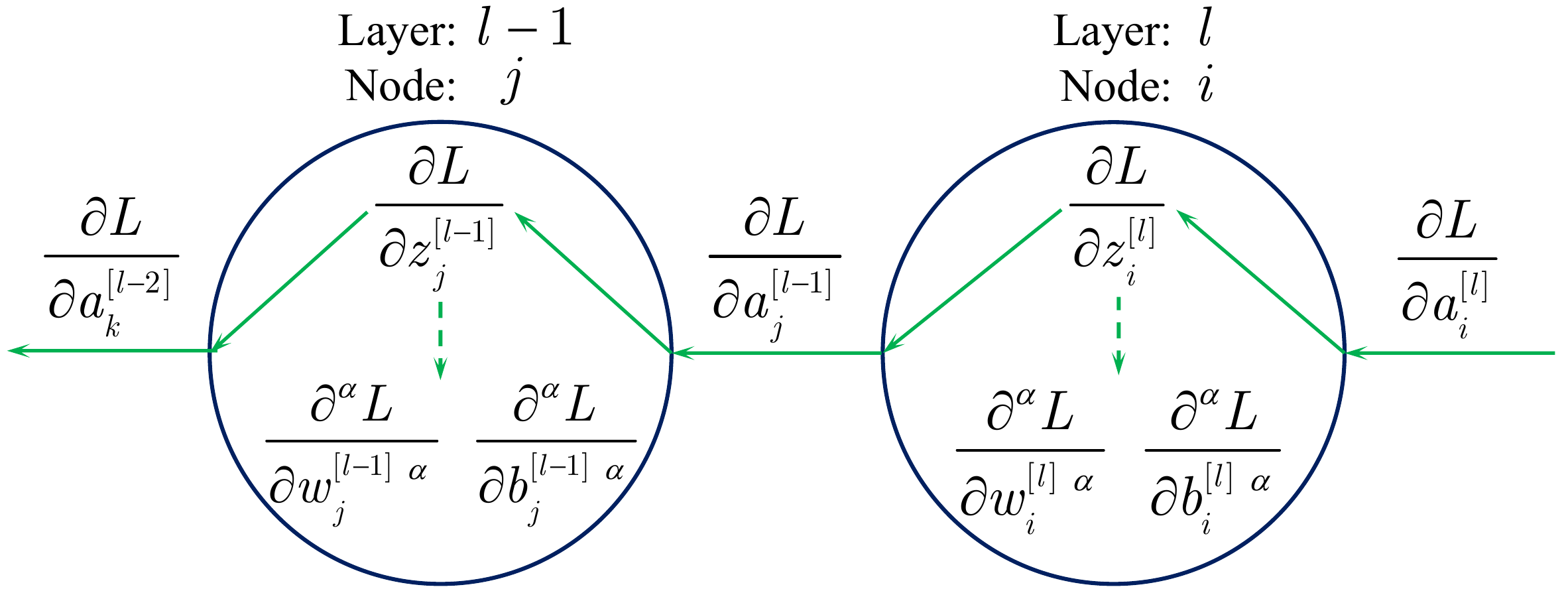}
  \caption{Backward propagation of fully connected layers.}\label{Fig3}
\end{figure}

\noindent In order to use the chain rule continuously, the transferring gradient is provided with integer order
{\color{b}
\begin{eqnarray}\label{Eq3.2}
\left\{ \begin{array}{rl}
\frac{\partial L}{\partial z_i^{[l]}} = & \hspace{-8pt} \frac{\partial L}{\partial a_i^{[l]}} \frac{\partial a_i^{[l]}}{\partial z_i^{[l]}} = \frac{\partial L}{\partial a_i^{[l]}} f^{(1)}(z_i^{[l]}),\\
\frac{\partial L}{\partial a_j^{[l-1]}} = & \hspace{-8pt} \sum\limits_{i = 1}^{n^{[l]}} \frac{\partial L}{\partial a_i^{[l]}} \frac{\partial a_i^{[l]}}{\partial z_i^{[l]}} \frac{\partial z_i^{[l]}}{\partial a_j^{[l-1]}} = \sum\limits_{i = 1}^{n^{[l]}} \frac{\partial L}{\partial z_i^{[l]}} w^{[l]}_{ij},
\end{array} \right.
\end{eqnarray}
}
but the updating gradient is replaced by fractional order
\begin{eqnarray}\label{Eq3.3}
\left\{ \begin{array}{rl}
\frac{\partial {}^\alpha L}{\partial w_{ij}^{[l]} {}^\alpha} = & \hspace{-8pt} \frac{\partial L}{\partial z_i^{[l]}} \frac{\partial {}^\alpha z_i^{[l]}}{\partial w_{ij}^{[l]} {}^\alpha} = \frac{\partial L}{\partial a_i^{[l]}} f^{(1)}(z_i^{[l]}) \frac{\partial {}^\alpha z_i^{[l]}}{\partial w_{ij}^{[l]} {}^\alpha},\\
\frac{\partial {}^\alpha L}{\partial b_i^{[l]} {}^\alpha} = & \hspace{-8pt} \frac{\partial L}{\partial z_i^{[l]}} \frac{\partial {}^\alpha z_i^{[l]}}{\partial b_i^{[l]} {}^\alpha} = \frac{\partial L}{\partial a_i^{[l]}} f^{(1)}(z_i^{[l]}) \frac{\partial {}^\alpha z_i^{[l]}}{\partial b_i^{[l]} {}^\alpha},
\end{array} \right.
\end{eqnarray}
with $i = 1, 2, \cdots, n^{[l]}$ and $j = 1, 2, \cdots, n^{[l-1]}$. When the fractional order gradient (\ref{Eq2.7}) is adopted, the gradient of the $K$-th iteration becomes
\begin{eqnarray}\label{Eq3.4}
\left\{ \begin{array}{rl}
\frac{\partial {}^\alpha z_i^{[l]}}{\partial w_{ij}^{[l]} {}^\alpha} = & \hspace{-8pt} \frac{a^{[l-1]}_{j~{(K-1)}}}{\Gamma(2-\alpha)}|w_{ij~{(K)}}^{[l]} - w_{ij~{(K-1)}}^{[l]}|^{1 - \alpha},\\
\frac{\partial {}^\alpha z_i^{[l]}}{\partial b_{i}^{[l]} {}^\alpha} = & \hspace{-8pt} \frac{1}{\Gamma(2-\alpha)}|b_{i~(K)}^{[l]} - b_{i~{(K-1)}}^{[l]}|^{1 - \alpha},
\end{array} \right.
\end{eqnarray}
where $w_{ij~(K)}^{[l]}$ and $b_{i~(K)}^{[l]}$ are parameters $w_{ij}^{[l]}$ and $b_{i}^{[l]}$ at the $K$-th iteration, $a^{[l-1]}_{j~{(K-1)}}$ is output $a^{[l-1]}_{j}$ at the ($K-1$)-th iteration. Consequently, the fractional order updating gradient is achieved by introducing (\ref{Eq3.4}) to (\ref{Eq3.3})
\begin{eqnarray}\label{Eq3.5}
\left\{ \begin{array}{rl}
\frac{\partial {}^\alpha L}{\partial w_{ij}^{[l]} {}^\alpha} =& \hspace{-8pt} \frac{\partial L}{\partial a^{[l]}_{i~{(K-1)}}} f^{(1)}(z^{[l]}_{i~{(K-1)}}) \frac{a^{[l-1]}_{j~{(K-1)}}}{\Gamma(2-\alpha)}|w_{ij~(K)}^{[l]} \\
& \hspace{-8pt} - w_{ij~{(K-1)}}^{[l]}|^{1 - \alpha},\\
\frac{\partial {}^\alpha L}{\partial b_i^{[l]} {}^\alpha} =& \hspace{-8pt} \frac{\partial L}{\partial a^{[l]}_{i~{(K-1)}}} f^{(1)}(z^{[l]}_{i~{(K-1)}}) \frac{1}{\Gamma(2-\alpha)}|b_{i~(K)}^{[l]} \\
& \hspace{-8pt} - b_{i~{(K-1)}}^{[l]}|^{1 - \alpha},
\end{array} \right.
\end{eqnarray}
where $z^{[l]}_{i~{(K-1)}}$ and $\frac{\partial L}{\partial a^{[l]}_{i~{(K-1)}}}$ are $z^{[l]}_{i}$ and $\frac{\partial L}{\partial a^{[l]}_{i}}$ at the $(K-1)$-th iteration, respectively.

Actually, samples are not input one by one in most case. When a batch of samples are input each time, (\ref{Eq3.5}) turns into
\begin{eqnarray}\label{Eq3.6}
\left\{ \begin{array}{rl}
\frac{\partial {}^\alpha L}{\partial w_{ij}^{[l]} {}^\alpha} = & \hspace{-8pt} \sum\limits_{s = 1}^{m} \frac{\partial L}{\partial z_{is}^{[l]}} \frac{\partial {}^\alpha z_{is}^{[l]}}{\partial w_{ij}^{[l]} {}^\alpha} \\
= & \hspace{-8pt} \sum\limits_{s = 1}^{m} \frac{\partial L}{\partial a^{[l]}_{is~{(K-1)}}} f^{(1)}(z^{[l]}_{is~{(K-1)}}) \frac{a^{[l-1]}_{js~{(K-1)}}}{\Gamma(2-\alpha)}|w_{ij~(K)}^{[l]} \\
& \hspace{-8pt} - w_{ij~{(K-1)}}^{[l]}|^{1 - \alpha},\\
\frac{\partial {}^\alpha L}{\partial b_i^{[l]} {}^\alpha} = & \hspace{-8pt} \sum\limits_{s = 1}^{m} \frac{\partial L}{\partial z_{is}^{[l]}} \frac{\partial {}^\alpha z_{is}^{[l]}}{\partial b_{i}^{[l]} {}^\alpha} \\
= & \hspace{-8pt} \sum\limits_{s = 1}^{m} \frac{\partial L}{\partial a^{[l]}_{is~{(K-1)}}} f^{(1)}(z^{[l]}_{is~{(K-1)}}) \frac{1}{\Gamma(2-\alpha)}|b_{i~(K)}^{[l]} \\
& \hspace{-8pt} - b_{i~{(K-1)}}^{[l]}|^{1 - \alpha},
\end{array} \right.
\end{eqnarray}
where $m$ is batch size, the subscript $s$ means $s$-th sample of a batch. After vectorization, above equations are simplified as
{\color{b}
\begin{eqnarray}\label{Eq3.7}
\left\{ \begin{array}{rl}
\frac{\partial {}^\alpha L}{\partial W^{[l]} {}^\alpha} = & \hspace{-8pt} \frac{1}{\Gamma(2-\alpha)} \left[\frac{\partial L}{\partial A^{[l]}_{~(K-1)}} \circ f^{(1)}(Z^{[l]}_{~(K-1)})\right] A^{[l-1] ~\rm T}_{~(K-1)} \\
& \hspace{-8pt} \circ |W_{~(K)}^{[l]} - W_{~{(K-1)}}^{[l]}|^{1 - \alpha}, \\
\frac{\partial {}^\alpha L}{\partial b^{[l]} {}^\alpha} = & \hspace{-8pt} \frac{1}{\Gamma(2-\alpha)} {\rm sum}\left(\frac{\partial L}{\partial A^{[l]}_{~(K-1)}} \circ f^{(1)}(Z^{[l]}_{~(K-1)})\right) \\
& \hspace{-8pt} \circ |b_{~(K)}^{[l]} - b_{~{(K-1)}}^{[l]}|^{1 - \alpha},\\
\frac{\partial L}{\partial A^{[l-1]}_{~(K-1)}} = & \hspace{-8pt} W_{~{(K-1)}}^{[l]~{\rm T}} \left[\frac{\partial L}{\partial A^{[l]}_{~(K-1)}} \circ f^{(1)}(Z^{[l]}_{~(K-1)})\right]
\end{array} \right.
\end{eqnarray}
}
where
\[
\begin{array}{*{20}{c}}
\frac{{{\partial ^\alpha }L}}{{\partial {W^{[l]\alpha }}}} = \left[ {\begin{array}{*{20}{c}}
{\frac{{{\partial ^\alpha }L}}{{\partial w_{11}^{[l]\alpha }}}}& \cdots &{\frac{{{\partial ^\alpha }L}}{{\partial w_{1{n^{[l - 1]}}}^{[l]\alpha }}}}\\
 \vdots & \ddots & \vdots \\
{\frac{{{\partial ^\alpha }L}}{{\partial w_{{n^{[l]}}1}^{[l]\alpha }}}}& \cdots &{\frac{{{\partial ^\alpha }L}}{{\partial w_{{n^{[l]}}{n^{[l - 1]}}}^{[l]\alpha }}}}
\end{array}} \right],
\frac{{{\partial ^\alpha }L}}{{\partial {b^{[l]\alpha }}}} = \left[ {\begin{array}{*{20}{c}}
{\frac{{{\partial ^\alpha }L}}{{\partial b_1^{[l]\alpha }}}}\\
 \vdots \\
{\frac{{{\partial ^\alpha }L}}{{\partial b_{{n^{[l]}}}^{[l]\alpha }}}}
\end{array}} \right],
\end{array}
\]

\[
\begin{array}{*{20}{c}}
W_{~(K)}^{[l]} = \left[ {\begin{array}{*{20}{c}}
{w_{11~(K)}^{[l]}}& \cdots &{w_{1{n^{[l - 1]}}~(K)}^{[l]}}\\
 \vdots & \ddots & \vdots \\
{w_{{n^{[l]}}1~(K)}^{[l]}}& \cdots &{w_{{n^{[l]}}{n^{[l - 1]}}~(K)}^{[l]}}
\end{array}} \right],
b_{~(K)}^{[l]} = \left[ {\begin{array}{*{20}{c}}
{b_{1~(K)}^{[l]}}\\
 \vdots \\
{b_{{n^{[l]}}~(K)}^{[l]}}
\end{array}} \right],
\end{array}
\]

\[
A_{~(K - 1)}^{[l - 1]{\rm T}} = {\left[ {\begin{array}{*{20}{c}}
{a_{11~(K - 1)}^{[l - 1]}}& \cdots &{a_{1m~(K - 1)}^{[l - 1]}}\\
 \vdots & \ddots & \vdots \\
{a_{{n^{[l - 1]}}1~(K - 1)}^{[l - 1]}}& \cdots &{a_{{n^{[l - 1]}}m~(K - 1)}^{[l - 1]}}
\end{array}} \right]^{\rm T}},
\]

\[
{\frac{{\partial L}}{{\partial A_{~(K - 1)}^{[l]}}} = \left[ {\begin{array}{*{20}{c}}
{\frac{{\partial L}}{{\partial a_{11~(K - 1)}^{[l]}}}}& \cdots &{\frac{{\partial L}}{{\partial a_{1m~(K - 1)}^{[l]}}}}\\
 \vdots & \ddots & \vdots \\
{\frac{{\partial L}}{{\partial a_{{n^{[l]}}1~(K - 1)}^{[l]}}}}& \cdots &{\frac{{\partial L}}{{\partial a_{{n^{[l]}}m~(K - 1)}^{[l]}}}}
\end{array}} \right],}
\]

\[
{{f^{(1)}}(Z_{~(K - 1)}^{[l]}) = \left[ {\begin{array}{*{20}{c}}
{{f^{(1)}}(z_{11~(K - 1)}^{[l]})}& \cdots &{{f^{(1)}}(z_{1m~(K - 1)}^{[l]})}\\
 \vdots & \ddots & \vdots \\
{{f^{(1)}}(z_{{n^{[l]}}1~(K - 1)}^{[l]})}& \cdots &{{f^{(1)}}(z_{{n^{[l]}}m~(K - 1)}^{[l]})}
\end{array}} \right],}
\]

\noindent signs like $-, |\cdot|$ and $(\cdot)^{1-\alpha}$ are the element-wise calculation, {\color{b}$\circ$ is the Hadamard product}, $\rm sum(\cdot)$ is the sum of a matrix along horizontal axis. Then the updating parameters of fully connected layers can be summarized as
\begin{eqnarray}\label{Eq3.8}
\left\{ \begin{array}{rl}
W^{[l]}_{~(K+1)} = & \hspace{-8pt} W^{[l]}_{~(K)} - \mu ~\frac{\partial {}^\alpha L}{\partial W^{[l]} {}^\alpha},\\
b^{[l]}_{~(K+1)} = & \hspace{-8pt} b^{[l]}_{~(K)} - \mu  ~\frac{\partial {}^\alpha L}{\partial b^{[l]} {}^\alpha}.
\end{array} \right.
\end{eqnarray}

\begin{theorem}\label{Theorem2}
The fully connected layers updated by fractional order gradient method (\ref{Eq3.7}, \ref{Eq3.8}) are convergent to real extreme point.
\end{theorem}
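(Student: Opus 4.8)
The plan is to reduce Theorem~\ref{Theorem2} to Theorem~\ref{Theorem1} by observing that (\ref{Eq3.5})--(\ref{Eq3.8}) are nothing but the scalar iteration (\ref{Eq2.7}) applied coordinate by coordinate, with the role of the one–dimensional gradient $f^{(1)}$ played by the \emph{ordinary} partial derivative of the loss with respect to each parameter. First I would record the elementary identities that follow from the (integer-order) chain rule already used for the transferring gradients: $\frac{\partial L}{\partial a_i^{[l]}}f^{(1)}(z_i^{[l]})\,a_j^{[l-1]}=\frac{\partial L}{\partial w_{ij}^{[l]}}$ and $\frac{\partial L}{\partial a_i^{[l]}}f^{(1)}(z_i^{[l]})=\frac{\partial L}{\partial b_i^{[l]}}$ (and, in the batched case (\ref{Eq3.6})--(\ref{Eq3.7}), the same with $L$ replaced by the summed mini-batch loss). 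Stacking all weights and biases of the layer into one vector $\theta$ and writing $\partial_p L$ for the $p$-th ordinary partial derivative of $L$, the update (\ref{Eq3.7})--(\ref{Eq3.8}) then reads, componentwise,
\[
\theta^{(p)}_{K+1}=\theta^{(p)}_{K}-\frac{\mu}{\Gamma(2-\alpha)}\,\partial_{p}L\big(\theta_{K-1}\big)\,\big|\theta^{(p)}_{K}-\theta^{(p)}_{K-1}\big|^{1-\alpha},
\]
which is exactly the form of (\ref{Eq2.7}).

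Next I would run the contradiction argument of Theorem~\ref{Theorem1} at the level of the full parameter vector. Assume the iterates converge, $\theta_K\to\Theta$, but $\Theta$ is not a genuine extreme point, i.e.\ $\nabla L(\Theta)\neq0$; then $\partial_{p_0}L(\Theta)\neq0$ for some coordinate $p_0$. By continuity of $\nabla L$ (valid once the activation $f$ is $C^1$) together with $\theta_{K-1}\to\Theta$, there is $\delta>0$ with $|\partial_{p_0}L(\theta_{K-1})|\ge\delta$ for all large $K$. Applying the estimates (\ref{Eq2.8})--(\ref{Eq2.10}) verbatim to the scalar sequence $\{\theta^{(p_0)}_K\}$ with $d=\mu\delta/\Gamma(2-\alpha)$: convergence forces $|\theta^{(p_0)}_{K}-\theta^{(p_0)}_{K-1}|\to0$, hence eventually $|\theta^{(p_0)}_{K}-\theta^{(p_0)}_{K-1}|<d^{1/\alpha}$, which gives $|\theta^{(p_0)}_{K+1}-\theta^{(p_0)}_{K}|>|\theta^{(p_0)}_{K}-\theta^{(p_0)}_{K-1}|$ for all large $K$, contradicting convergence of the coordinate $\theta^{(p_0)}_K$. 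Therefore $\nabla L(\Theta)=0$, i.e.\ the fully connected layer has converged to a real extreme point of the loss. The mini-batch and vectorized forms change only the meaning of the ordinary gradient (it becomes the gradient of the summed batch loss), so the same argument applies.

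The main obstacle, and the reason the reduction is not completely mechanical, is that in Theorem~\ref{Theorem1} $f$ is a fixed one–dimensional convex function, whereas here the ``gradient function'' of coordinate $p$, namely $\partial_p L$, depends on \emph{all} the other simultaneously varying parameters; so one cannot argue one coordinate at a time in isolation but must pass to the joint limit $\Theta$ and use continuity of $\nabla L$ there. Two further points need care. ``Real extreme point'' should be read as ``stationary point $\nabla L=0$'', since the network loss is generally non-convex and there is no unique minimizer to invoke as in Theorem~\ref{Theorem1}; the content of the theorem is precisely that the modified method, when it converges, lands on a true critical point of $L$ rather than on the spurious order/initial-value–dependent point of direct fractional schemes such as \cite{Pu:2015TNNLS}. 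Finally, the degenerate case $\theta^{(p_0)}_K=\theta^{(p_0)}_{K-1}$ makes the factor $|\theta^{(p_0)}_{K}-\theta^{(p_0)}_{K-1}|^{1-\alpha}$ singular for $\alpha>1$ and should be handled using the $\delta$-regularized variant of Remark~\ref{Remark1}, for which the same chain of inequalities goes through with $(|\theta^{(p_0)}_{K}-\theta^{(p_0)}_{K-1}|+\delta)^{1-\alpha}$ in place of $|\theta^{(p_0)}_{K}-\theta^{(p_0)}_{K-1}|^{1-\alpha}$.
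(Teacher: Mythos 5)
Your proposal is correct and follows the same basic strategy as the paper's proof: rewrite the vectorized update (\ref{Eq3.7}), (\ref{Eq3.8}) componentwise so that each weight $w_{ij}^{[l]}$ and bias $b_i^{[l]}$ obeys exactly the scalar iteration (\ref{Eq2.7}) with the ordinary partial derivative of $L$ in place of $f^{(1)}$, and then repeat the contradiction argument of Theorem \ref{Theorem1} on that coordinate (the paper does this explicitly in its equations culminating in (\ref{Eq3.84}) and the inequalities that mirror (\ref{Eq2.8})--(\ref{Eq2.10})). Where you genuinely depart from the paper is in how the key lower bound $\delta>0$ on the gradient is obtained: the paper transplants the one-dimensional setup verbatim, assuming $w_{ij~(K)}^{[l]}$ converges to a point different from ``the real extreme point $w_{ij}^{[l]*}$'' and asserting that $\delta=\inf_{K-1>N}\bigl|\partial L/\partial w_{ij~(K-1)}^{[l]}\bigr|>0$ ``must hold,'' which quietly ignores that this partial derivative depends on all the other simultaneously varying parameters and that a per-coordinate extreme point is not well defined for a non-convex multivariable loss. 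You instead pass to the joint limit $\Theta$ of the whole parameter vector, use continuity of $\nabla L$ to get the uniform bound on the distinguished coordinate, and read ``real extreme point'' as a stationary point $\nabla L(\Theta)=0$; you also note the need for the regularization of Remark \ref{Remark1} when consecutive iterates coincide and $\alpha>1$. So your argument reaches the same conclusion by the same contradiction mechanism, but it supplies the justification at precisely the spot where the paper's coordinatewise argument is loosest, at the cost of the (reasonable) extra hypotheses that the full parameter vector converges and that $\nabla L$ is continuous.
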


{\color{b}
\begin{proof}
When integer order gradients are adopted in backward propagation, the gradients of $W_{(K-1)}^{[l]}$ and $b_{(K-1)}^{[l]}$ can be written as
\begin{eqnarray}\label{Eq3.81}
\left\{ \begin{array}{rl}
\frac{\partial L}{\partial W^{[l]}_{(K-1)}} = & \hspace{-8pt} \left[\frac{\partial L}{\partial A^{[l]}_{~(K-1)}} \circ f^{(1)}(Z^{[l]}_{~(K-1)})\right] A^{[l-1] ~\rm T}_{~(K-1)}, \\
\frac{\partial L}{\partial b^{[l]}_{(K-1)}} = & \hspace{-8pt} {\rm sum}\left(\frac{\partial L}{\partial A^{[l]}_{~(K-1)}} \circ f^{(1)}(Z^{[l]}_{~(K-1)})\right). \\
\end{array} \right.
\end{eqnarray}

\noindent Thus fractional order gradients (\ref{Eq3.7}) turn into
\begin{eqnarray}\label{Eq3.82}
\left\{ \begin{array}{rl}
\frac{\partial {}^\alpha L}{\partial W^{[l]} {}^\alpha} = & \hspace{-8pt} \frac{1}{\Gamma(2-\alpha)} \frac{\partial L}{\partial W^{[l]}_{(K-1)}} \circ |W_{~(K)}^{[l]} - W_{~{(K-1)}}^{[l]}|^{1 - \alpha}, \\
\frac{\partial {}^\alpha L}{\partial b^{[l]} {}^\alpha} = & \hspace{-8pt} \frac{1}{\Gamma(2-\alpha)} \frac{\partial L}{\partial b^{[l]}_{(K-1)}} \circ |b_{~(K)}^{[l]} - b_{~{(K-1)}}^{[l]}|^{1 - \alpha}.\\
\end{array} \right.
\end{eqnarray}

\noindent Then the updating parameters (\ref{Eq3.8}) becomes
\begin{eqnarray}\label{Eq3.83}
\left\{ \begin{array}{rl}
W^{[l]}_{~(K+1)} = & \hspace{-8pt} W^{[l]}_{~(K)} - \mu ~\frac{1}{\Gamma(2-\alpha)} \frac{\partial L}{\partial W^{[l]}_{(K-1)}} \\
& \hspace{-8pt} \circ |W_{~(K)}^{[l]} - W_{~{(K-1)}}^{[l]}|^{1 - \alpha},\\
b^{[l]}_{~(K+1)} = & \hspace{-8pt} b^{[l]}_{~(K)} - \mu  ~\frac{1}{\Gamma(2-\alpha)} \frac{\partial L}{\partial b^{[l]}_{(K-1)}} \\
& \hspace{-8pt} \circ |b_{~(K)}^{[l]} - b_{~{(K-1)}}^{[l]}|^{1 - \alpha}.
\end{array} \right.
\end{eqnarray}

\noindent For a certain element in $W^{[l]}$ or $b^{[l]}$, the $w^{[l]}_{ij}$ or $b^{[l]}_{i}$ can be updated by
\begin{eqnarray}\label{Eq3.84}
\left\{ \begin{array}{rl}
w^{[l]}_{ij~(K+1)} = & \hspace{-8pt} w^{[l]}_{ij~(K)} - \mu ~\frac{1}{\Gamma(2-\alpha)} \frac{\partial L}{\partial w^{[l]}_{ij~(K-1)}} \\
& \hspace{-8pt} |w_{ij~(K)}^{[l]} - w_{ij~{(K-1)}}^{[l]}|^{1 - \alpha},\\
b^{[l]}_{i~(K+1)} = & \hspace{-8pt} b^{[l]}_{i~(K)} - \mu  ~\frac{1}{\Gamma(2-\alpha)} \frac{\partial L}{\partial b^{[l]}_{i~(K-1)}} \\
& \hspace{-8pt} |b_{i~(K)}^{[l]} - b_{i~{(K-1)}}^{[l]}|^{1 - \alpha}.
\end{array} \right.
\end{eqnarray}

It is similar to the proof of Theorem \ref{Theorem1}. Assume that $w^{[l]}_{ij~(K)}$ converges to a point $w^{[l]\prime}_{ij}$ that is different from real extreme point $w^{[l]*}_{ij}$, namely $\mathop {\lim}\limits_{K \to \infty} |w^{[l]}_{ij~(K)} - w^{[l]\prime}_{ij}| = 0$. Therefore, it can be concluded that for any sufficient small positive scalar $\varepsilon$, there exists a sufficient large number $N \in \mathbb{N}$ such that $|w^{[l]}_{ij~(K)} - w^{[l]\prime}_{ij}| < \varepsilon < |w^{[l]*}_{ij}-w^{[l]\prime}_{ij}|$ for any $K-1>N$. Then {\color{b} $\delta  = \mathop {\inf }\limits_{K - 1 > N} |\frac{\partial L}{\partial w^{[l]}_{ij~(K-1)}}| > 0$ } must hold.

\noindent According to (\ref{Eq3.84}), the following inequality is obtained
\begin{eqnarray}\label{Eq3.85}
\begin{array}{l}
| w^{[l]}_{ij~(K+1)} - w^{[l]}_{ij~(K)} | \\
\hspace{-9pt} =  \left| \mu ~\frac{1}{\Gamma(2-\alpha)} \frac{\partial L}{\partial w^{[l]}_{ij~(K-1)}}  |w_{ij~(K)}^{[l]} - w_{ij~{(K-1)}}^{[l]}|^{1 - \alpha} \right|\\
\hspace{-9pt} =   \frac{\mu}{\Gamma(2-\alpha)}  \left| \frac{\partial L}{\partial w^{[l]}_{ij~(K-1)}} \right| |w_{ij~(K)}^{[l]} - w_{ij~{(K-1)}}^{[l]}|^{1 - \alpha} \\
\hspace{-9pt} \ge  d |w_{ij~(K)}^{[l]} - w_{ij~{(K-1)}}^{[l]}|^{1 - \alpha},
\end{array}
\end{eqnarray}
with $d = \frac{\mu \delta}{\Gamma(2-\alpha)}$.

\noindent Considering that one can always find a $\varepsilon$ such that $2\varepsilon < d^{\frac{1}{\alpha}}$, then the following inequality will hold
\begin{eqnarray}\label{E3.86}
\begin{array}{l}
|w_{ij~(K)}^{[l]} - w_{ij~{(K-1)}}^{[l]}| \\
\hspace{-9pt} \le |w_{ij~(K)}^{[l]} - w^{[l]\prime}_{ij}| + |w_{ij~{(K-1)}}^{[l]} - w^{[l]\prime}_{ij}| \\
\hspace{-9pt} < 2\varepsilon < d^{\frac{1}{\alpha}}.
\end{array}
\end{eqnarray}
The above inequality could be rewritten as $d > |w_{ij~(K)}^{[l]} - w_{ij~{(K-1)}}^{[l]}|^\alpha$. When this inequality is introduced into (\ref{Eq3.85}), the result is
\begin{eqnarray}\label{Eq3.87}
| w^{[l]}_{ij~(K+1)} - w^{[l]}_{ij~(K)} | > |w_{ij~(K)}^{[l]} - w_{ij~{(K-1)}}^{[l]}|,
\end{eqnarray}
which implies that $w^{[l]}_{ij~(K)}$ is not convergent. Similarly, the same result will be easily obtained for  $b^{[l]}_{i~(K)}$. It contradicts to the assumption mentioned before, thus the proof is completed
\end{proof}
}

Compared with integer order backward propagation \cite{Rumelhart:1988Nature}, the same transferring gradient $\frac{\partial L}{\partial A^{[l-1]}_{~(K-1)}}$ is kept, but the difference exists in updating gradient where the order is changed as $\frac{\partial {}^\alpha L}{\partial W^{[l]} {}^\alpha}$ and $\frac{\partial {}^\alpha L}{\partial b^{[l]} {}^\alpha}$. {\color{b} Even so, based on the Theorem \ref{Theorem2}, $W^{[l]}$ and $b^{[l]}$ updated by fractional order gradients will converge to the same real extreme points as integer order gradients do.}

\begin{remark}\label{Remark2}
Because of integer order transferring gradient, the chain rule is still available for the proposed gradient method (\ref{Eq3.7}, \ref{Eq3.8}), which avoids complicated calculation caused by fractional order derivatives, especially derivatives of activation function. As modified fractional order gradient (\ref{Eq2.7}) is applied smoothly, the speed of convergence is improved and real extreme point can be reached now.
\end{remark}

\subsection{Convolution Layers}
Although the key calculation of convolution layers is similar to fully connected layers, the complicated structure makes its iterative algorithm different. It is hard to understand the algorithm without help of figures or auxiliary descriptions.

For subsequent research, the forward propagation of convolution layers is drawn briefly in Fig. \ref{Fig4} where $a^{[l]} \in \mathbb R^{n_H^{[l]} \times n_W^{[l]} \times n_C^{[l]}}$ is the output of the $l$-th layer, $w_c^{[l]} \in \mathbb R^{F^{[l]} \times F^{[l]} \times n_C^{[l-1]}}$ and $b_c^{[l]} \in \mathbb R$ are weight and bias for channel $c$, $F^{[l]}$ is the size of convolution kernel, ${a'}^{[l-1]}$ is a slice of $a^{[l-1]}$ by selecting $F^{[l]}$ rows and $F^{[l]}$ columns over all channels (red cube), $n_H^{[l]}$, $n_W^{[l]}$ and $n_C^{[l]}$ are height, width and channels of output $a^{[l]}$, respectively.

\begin{figure}[htbp]
  \centering
  \includegraphics[width=0.48\textwidth]{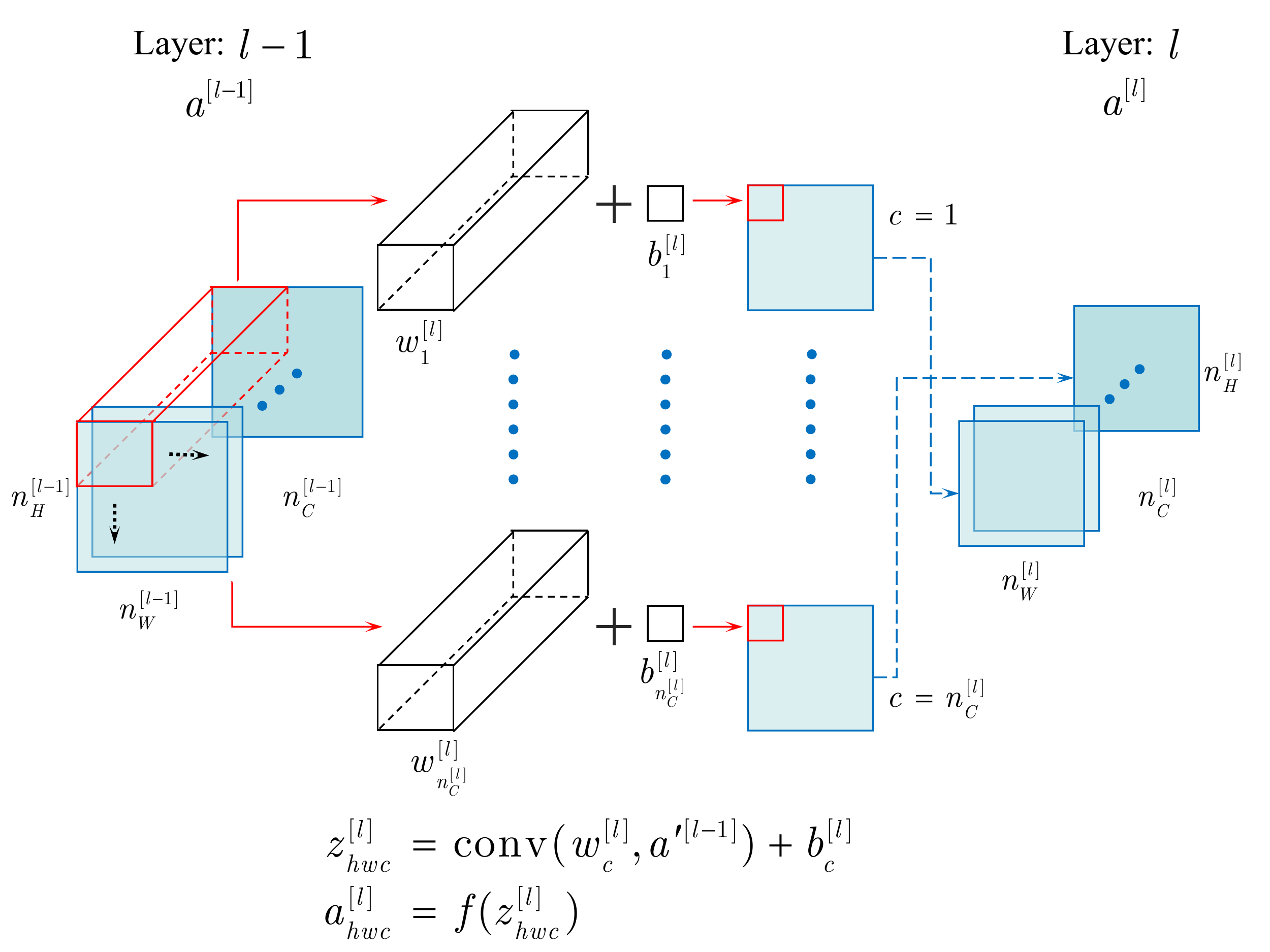}
  \caption{Forward propagation of convolution layers.}\label{Fig4}
\end{figure}

Similarly, the gradients of backward propagation are divided into two types. The transferring gradient of convolution layers is also kept the same as integer order gradient. Considering that the input is a batch of samples, the updating gradient is
\begin{eqnarray}\label{Eq3.9}
\left\{ \begin{array}{rl}
\frac{{{\partial ^\alpha }L}}{{\partial w_{ijkc}^{[l]\alpha }}} = & \hspace{-8pt} \sum\limits_{s = 1}^m {\sum\limits_{h = 1}^{n_H^{[l]}} {\sum\limits_{w = 1}^{n_W^{[l]}} {\frac{{\partial L}}{{\partial A_{shwc}^{[l]}}}} } } \frac{{\partial A_{shwc}^{[l]}}}{{\partial Z_{shwc}^{[l]}}}\frac{{{\partial ^\alpha }Z_{shwc}^{[l]}}}{{\partial w_{ijkc}^{[l]\alpha }}}\\
= & \hspace{-8pt} \sum\limits_{s = 1}^m {\sum\limits_{h = 1}^{n_H^{[l]}} {\sum\limits_{w = 1}^{n_W^{[l]}} {\frac{{\partial L}}{{\partial A_{shwc}^{[l]}}}} } } {f^{(1)}}(Z_{shwc}^{[l]})\frac{{{\partial ^\alpha }Z_{shwc}^{[l]}}}{{\partial w_{ijkc}^{[l]\alpha }}},\\
\frac{{{\partial ^\alpha }L}}{{\partial b_c^{[l]\alpha }}} = & \hspace{-8pt} \sum\limits_{s = 1}^m {\sum\limits_{h = 1}^{n_H^{[l]}} {\sum\limits_{w = 1}^{n_W^{[l]}} {\frac{{\partial L}}{{\partial A_{shwc}^{[l]}}}} } } \frac{{\partial A_{shwc}^{[l]}}}{{\partial Z_{shwc}^{[l]}}}\frac{{{\partial ^\alpha }Z_{shwc}^{[l]}}}{{\partial b_c^{[l]\alpha }}}\\
= & \hspace{-8pt} \sum\limits_{s = 1}^m {\sum\limits_{h = 1}^{n_H^{[l]}} {\sum\limits_{w = 1}^{n_W^{[l]}} {\frac{{\partial L}}{{\partial A_{shwc}^{[l]}}}} } } {f^{(1)}}(Z_{shwc}^{[l]})\frac{{{\partial ^\alpha }Z_{shwc}^{[l]}}}{{\partial b_c^{[l]\alpha }}},
\end{array} \right.
\end{eqnarray}
where $W^{[l]} = [w^{[l]}_{ijkc}]\in \mathbb R^{F^{[l]} \times F^{[l]} \times n_C^{[l-1]} \times n_C^{[l]}}$ is the weight that contains all $w^{[l]}_c$, $A^{[l]} \in \mathbb R^{m \times n_H^{[l]} \times n_W^{[l]} \times n_C^{[l]}}$ is the $a^{[l]}$ over all  $m$ samples.

\noindent When the fractional order gradient method (\ref{Eq2.7}) is introduced, the updating gradient at the $K$-th iteration is changed to
\begin{eqnarray}\label{Eq3.10}
\left\{ \begin{array}{rl}
\frac{{{\partial ^\alpha }L}}{{\partial w_{ijkc}^{[l]\alpha }}} = & \hspace{-8pt} \sum\limits_{s = 1}^m \sum\limits_{h = 1}^{n_H^{[l]}} \sum\limits_{w = 1}^{n_W^{[l]}} [\frac{{\partial L}}{{\partial A_{shwc~(K-1)}^{~[l]}}} {f^{(1)}}(Z_{shwc~(K-1)}^{~[l]}) \\
& \hspace{-8pt} \frac{{{A'}_{ijk~(K - 1)}^{~[l - 1]}}}{\Gamma(2-\alpha)} {|w_{ijkc~(K)}^{[l]} - w_{ijkc~(K - 1)}^{[l]}{|^{1 - \alpha }}}],\\
\frac{{{\partial ^\alpha }L}}{{\partial b_c^{[l]\alpha }}} = & \hspace{-8pt} \sum\limits_{s = 1}^m \sum\limits_{h = 1}^{n_H^{[l]}} \sum\limits_{w = 1}^{n_W^{[l]}} [\frac{{\partial L}}{{\partial A_{shwc~(K-1)}^{~[l]}}} {f^{(1)}}(Z_{shwc~(K-1)}^{~[l]}) \\
& \hspace{-8pt} {\frac{1}{{\Gamma (2 - \alpha )}}|b_{c~(K)}^{[l]} - b_{c~(K - 1)}^{[l]}{|^{1 - \alpha }}}],
\end{array} \right.
\end{eqnarray}
where ${A'}^{[l - 1]}$ is ${a'}^{[l - 1]}$ of the $s$-th sample. It could be simply regarded as ${A'}^{[l - 1]} = A^{[l - 1]}{[s, V_{start} : V_{end}, H_{start} : H_{end}]} \in \mathbb R^{F^{[l]} \times F^{[l]} \times n_C^{[l-1]}}$ with
\[V_{start}=(h-1) \times stride + 1, ~V_{end}=V_{start} + F^{[l]}, \]
\[H_{start}=(w-1) \times stride + 1, ~H_{end}=H_{start} + F^{[l]},\]
and $stride$ is moving length of convolution kernel each time. After vectorization, (\ref{Eq3.10}) is further simplified as
{\color{b}
\begin{eqnarray}\label{Eq3.11}
\hspace{-8pt} \left\{ \begin{array}{rl}
\frac{{{\partial ^\alpha }L}}{{\partial w_{c}^{[l]\alpha }}} = & \hspace{-8pt} \sum\limits_{s = 1}^m \sum\limits_{h = 1}^{n_H^{[l]}} \sum\limits_{w = 1}^{n_W^{[l]}} [\frac{1}{\Gamma(2-\alpha)} \frac{{\partial L}}{{\partial A_{shwc~(K-1)}^{~[l]}}} {f^{(1)}}(Z_{shwc~(K-1)}^{~[l]}) \\
& \hspace{-8pt} {A'}_{~(K - 1)}^{~[l - 1]} \circ |w_{c~(K)}^{[l]} - w_{c~(K - 1)}^{[l]}|^{1 - \alpha }],\\
\frac{{{\partial ^\alpha }L}}{{\partial b_c^{[l]\alpha }}} = & \hspace{-8pt} \sum\limits_{s = 1}^m \sum\limits_{h = 1}^{n_H^{[l]}} \sum\limits_{w = 1}^{n_W^{[l]}} [\frac{1}{{\Gamma (2 - \alpha )}} \frac{{\partial L}}{{\partial A_{shwc~(K-1)}^{~[l]}}} {f^{(1)}}(Z_{shwc~(K-1)}^{~[l]}) \\
& \hspace{-8pt} |b_{c~(K)}^{[l]} - b_{c~(K - 1)}^{[l]}|^{1 - \alpha}],
\end{array} \right.
\end{eqnarray}
}

\noindent In order to show the algorithm clearly, the calculation of (\ref{Eq3.11}) is transformed to following process.

\begin{algorithm}[htb]
\caption{Backward propagation of convolution layers by fractional order gradient method.}\label{Algorithm1}
\begin{algorithmic}[1]
\STATE {\color{b}$\frac{\partial L}{\partial Z^{[l]}_{~(K-1)}} = \frac{\partial L}{\partial A^{[l]}_{~(K-1)}} \circ f^{(1)}(Z^{[l]}_{~(K-1)})$}
\FOR{$s=1, 2, \cdots, m$}
\FOR{$h=1, 2, \cdots, n^{[l]}_H$}
\FOR{$w=1, 2, \cdots, n^{[l]}_W$}
\STATE {\color{b} $V_{start}=(h-1) \times stride + 1, ~V_{end}=V_{start} + F^{[l]}$}
\STATE {\color{b} $H_{start}=(w-1) \times stride + 1, ~H_{end}=H_{start} + F^{[l]}$}
\FOR{$c=1, 2, \cdots, n^{[l]}_C$}
\STATE ${A'}_{~(K - 1)}^{~[l - 1]} = {A}_{~(K - 1)}^{~[l - 1]}[s, V_{start} : V_{end}, H_{start} : H_{end}]$
\STATE {\color{b}$\frac{\partial ^\alpha L}{\partial w^{[l]\alpha}_{c}} += \frac{\partial L}{\partial Z^{[l]}_{shwc~(K-1)}} \frac{{A'}_{~(K - 1)}^{~[l - 1]}}{\Gamma (2-\alpha)} \circ |w^{[l]}_{c~(K)} - w^{[l]}_{c~(K-1)}|^{1-\alpha}$}
\STATE {\color{b}$\frac{\partial ^\alpha L}{\partial b^{[l]\alpha}_{c}} += \frac{1}{\Gamma (2-\alpha)} \frac{\partial L}{\partial Z^{[l]}_{shwc~(K-1)}} |b^{[l]}_{c~(K)} - b^{[l]}_{c~(K-1)}|^{1-\alpha}$}
\STATE $\frac{\partial L}{\partial {A'}^{[l-1]}_{~(K)}} = \frac{\partial L}{\partial Z^{[l]}_{shwc~(K)}} w^{[l]}_{c~(K)}$
\STATE $\frac{\partial L}{\partial A^{[l-1]}_{~(K)}[s, V_{start} : V_{end}, H_{start} : H_{end}]} += \frac{\partial L}{\partial {A'}^{[l-1]}_{~(K)}}$
\ENDFOR
\ENDFOR
\ENDFOR
\ENDFOR
\STATE \textbf{return} $\frac{\partial ^\alpha L}{\partial W^{[l]\alpha}}$, $\frac{\partial ^\alpha L}{\partial b^{[l]\alpha}}$, $\frac{\partial L}{\partial A^{[l-1]}_{~(K)}}$
\end{algorithmic}
\end{algorithm}

\noindent Then the updating parameters of convolution layers is as follows
\begin{eqnarray}\label{Eq3.12}
\left\{ \begin{array}{rl}
W^{[l]}_{~(K+1)} = & \hspace{-8pt} W^{[l]}_{~(K)} - \mu ~\frac{\partial {}^\alpha L}{\partial W^{[l]} {}^\alpha},\\
b^{[l]}_{~(K+1)} = & \hspace{-8pt} b^{[l]}_{~(K)} - \mu  ~\frac{\partial {}^\alpha L}{\partial b^{[l]} {}^\alpha}.
\end{array} \right.
\end{eqnarray}

\begin{theorem}\label{Theorem3}
The convolution layers updated by fractional order gradient method (\ref{Eq3.11}, \ref{Eq3.12}) are convergent to real extreme point.
\end{theorem}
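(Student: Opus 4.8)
The plan is to reduce the convolution-layer iteration to exactly the scalar recursion already analysed in Theorem~\ref{Theorem1} and Theorem~\ref{Theorem2}. First I would record the integer order gradient of a convolution kernel and bias: combining $\frac{\partial L}{\partial Z^{[l]}_{shwc~(K-1)}}=\frac{\partial L}{\partial A^{[l]}_{shwc~(K-1)}}f^{(1)}(Z^{[l]}_{shwc~(K-1)})$ with the accumulation over $(s,h,w)$ in Algorithm~\ref{Algorithm1} (which is the standard convolution backprop) gives, in the spirit of (\ref{Eq3.81}), $\frac{\partial L}{\partial w^{[l]}_{c~(K-1)}}=\sum_{s}\sum_{h}\sum_{w}\frac{\partial L}{\partial Z^{[l]}_{shwc~(K-1)}}\,{A'}^{~[l-1]}_{~(K-1)}$ and $\frac{\partial L}{\partial b^{[l]}_{c~(K-1)}}=\sum_{s}\sum_{h}\sum_{w}\frac{\partial L}{\partial Z^{[l]}_{shwc~(K-1)}}$. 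The key observation is that the element-wise factor $|w^{[l]}_{c~(K)}-w^{[l]}_{c~(K-1)}|^{1-\alpha}$ (resp. the scalar $|b^{[l]}_{c~(K)}-b^{[l]}_{c~(K-1)}|^{1-\alpha}$) in (\ref{Eq3.11}) does not depend on the summation indices $s,h,w$, so by distributivity of the Hadamard product it can be pulled outside the triple sum. This collapses (\ref{Eq3.11}) to the compact form $\frac{\partial^\alpha L}{\partial w^{[l]\alpha}_{c}}=\frac{1}{\Gamma(2-\alpha)}\frac{\partial L}{\partial w^{[l]}_{c~(K-1)}}\circ|w^{[l]}_{c~(K)}-w^{[l]}_{c~(K-1)}|^{1-\alpha}$, which is structurally identical to (\ref{Eq3.82}).

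Second, I would pass to a single scalar component. Writing the update (\ref{Eq3.12}) entry by entry for any $w^{[l]}_{ijkc}$ (and likewise for $b^{[l]}_c$) reproduces precisely the scalar recursion (\ref{Eq3.84}),
\[
w^{[l]}_{ijkc~(K+1)}=w^{[l]}_{ijkc~(K)}-\frac{\mu}{\Gamma(2-\alpha)}\,\frac{\partial L}{\partial w^{[l]}_{ijkc~(K-1)}}\,\bigl|w^{[l]}_{ijkc~(K)}-w^{[l]}_{ijkc~(K-1)}\bigr|^{1-\alpha}.
\]
From here the argument of Theorem~\ref{Theorem1} applies verbatim: assuming $w^{[l]}_{ijkc~(K)}\to w^{[l]\prime}_{ijkc}\ne w^{[l]*}_{ijkc}$, one obtains $\delta=\inf_{K-1>N}\bigl|\partial L/\partial w^{[l]}_{ijkc~(K-1)}\bigr|>0$, hence $|w^{[l]}_{ijkc~(K+1)}-w^{[l]}_{ijkc~(K)}|\ge d\,|w^{[l]}_{ijkc~(K)}-w^{[l]}_{ijkc~(K-1)}|^{1-\alpha}$ with $d=\mu\delta/\Gamma(2-\alpha)$; choosing $\varepsilon$ with $2\varepsilon<d^{1/\alpha}$ forces $|w^{[l]}_{ijkc~(K+1)}-w^{[l]}_{ijkc~(K)}|>|w^{[l]}_{ijkc~(K)}-w^{[l]}_{ijkc~(K-1)}|$, contradicting convergence. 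The identical reasoning disposes of every bias $b^{[l]}_c$, which finishes the proof.

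The main obstacle is the bookkeeping that legitimises the reduction, not any new analysis: one must check that the bracketed prefactor in (\ref{Eq3.11}) really is the genuine integer order gradient $\partial L/\partial w^{[l]}_{c~(K-1)}$ — i.e. that the slicing convention ${A'}^{~[l-1]}$ and the accumulation over $(s,h,w)$ and over the input positions written in Algorithm~\ref{Algorithm1} match the classical convolution backpropagation formula — and that the $|\cdot|^{1-\alpha}$ term is genuinely free of $s,h,w$ so that factoring it out of the sum is valid. Once that reduction is in place the convergence statement is handled componentwise exactly as before: the only property used is that at a non-stationary limit the updating gradient stays bounded away from zero, so the singular exponent $1-\alpha<1$ inflates the step and breaks the Cauchy property. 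As with Theorems~\ref{Theorem1} and~\ref{Theorem2}, the claim is to be read as: any convergent trajectory can only settle at a point where the fractional order updating gradient vanishes, which coincides with the real extreme point.
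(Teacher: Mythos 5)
Your proposal is correct and takes essentially the same route as the paper: the paper's own argument for Theorem~\ref{Theorem3} is precisely the sketch you give, namely rewriting the bracketed prefactor in (\ref{Eq3.11}) as the integer order gradient so that the update takes the form of (\ref{Eq3.82}), and then running the contradiction argument of Theorems~\ref{Theorem1} and~\ref{Theorem2} componentwise on each entry of $W^{[l]}$ and $b^{[l]}$ in (\ref{Eq3.12}). In fact your write-up (including the observation that $|w^{[l]}_{c~(K)}-w^{[l]}_{c~(K-1)}|^{1-\alpha}$ is independent of the summation indices and can be factored out of the triple sum) is more detailed than the paper's brief proof sketch.
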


{\color{b}
The proof resembles Theorem \ref{Theorem2}. By introducing integer order gradients, the gradients in (\ref{Eq3.11}) are changed into the form like (\ref{Eq3.82}). Then it is a proof by contradiction that could be done for each element of $W^{[l]}$ and $b^{[l]}$ in (\ref{Eq3.12}).
}

\begin{remark}\label{Remark3}
Based on backward propagation of convolution layers, when padding is introduced into the $l$-th layers, the transferring gradient will be influenced. The gradient $\frac{\partial L}{\partial A^{[l-1]}}$ calculated by Algorithm \ref{Algorithm1} is the gradient of padded output. The padded part of $\frac{\partial L}{\partial A^{[l-1]}}$ needs deleting. However, there is no change happened for the updating gradient of fractional order.
\end{remark}

\begin{remark}\label{Remark4}
During the training procedure, a tiny value could be added to (\ref{Eq3.7}, \ref{Eq3.11}) so that the singularity caused by $W^{[l]}_{~(K)} = W^{[l]}_{~(K-1)}$ or $b^{[l]}_{~(K)} = b^{[l]}_{~(K-1)}$ is avoided easily. Hence the gradients modified by (\ref{Eq2.11}) are listed below
{\color{b}
\begin{eqnarray}\label{Eq3.13}
\left\{ \begin{array}{rl}
\frac{\partial {}^\alpha L}{\partial W^{[l]} {}^\alpha} = & \hspace{-8pt} \frac{1}{\Gamma(2-\alpha)} \left[\frac{\partial L}{\partial A^{[l]}_{~(K-1)}} \circ f^{(1)}(Z^{[l]}_{~(K-1)})\right] A^{[l-1] ~\rm T}_{~(K-1)} \\
& \hspace{-8pt} \circ |W_{~(K)}^{[l]} - W_{~{(K-1)}}^{[l]} + \delta|^{1 - \alpha}, \\
\frac{\partial {}^\alpha L}{\partial b^{[l]} {}^\alpha} = & \hspace{-8pt} \frac{1}{\Gamma(2-\alpha)} {\rm sum}\left(\frac{\partial L}{\partial A^{[l]}_{~(K-1)}} \circ f^{(1)}(Z^{[l]}_{~(K-1)})\right) \\
& \hspace{-8pt} \circ |b_{~(K)}^{[l]} - b_{~{(K-1)}}^{[l]} + \delta|^{1 - \alpha},\\
\end{array} \right.
\end{eqnarray}
\begin{eqnarray}\label{Eq3.14}
\hspace{-8pt} \left\{ \begin{array}{rl}
\frac{{{\partial ^\alpha }L}}{{\partial w_{c}^{[l]\alpha }}} = & \hspace{-8pt} \sum\limits_{s = 1}^m \sum\limits_{h = 1}^{n_H^{[l]}} \sum\limits_{w = 1}^{n_W^{[l]}} [\frac{1}{\Gamma(2-\alpha)} \frac{{\partial L}}{{\partial A_{shwc~(K-1)}^{~[l]}}} {f^{(1)}}(Z_{shwc~(K-1)}^{~[l]}) \\
& \hspace{-8pt} {A'}_{~(K - 1)}^{~[l - 1]} \circ |w_{c~(K)}^{[l]} - w_{c~(K - 1)}^{[l]} + \delta|^{1 - \alpha }],\\
\frac{{{\partial ^\alpha }L}}{{\partial b_c^{[l]\alpha }}} = & \hspace{-8pt} \sum\limits_{s = 1}^m \sum\limits_{h = 1}^{n_H^{[l]}} \sum\limits_{w = 1}^{n_W^{[l]}} [\frac{1}{{\Gamma (2 - \alpha )}} \frac{{\partial L}}{{\partial A_{shwc~(K-1)}^{~[l]}}} {f^{(1)}}(Z_{shwc~(K-1)}^{~[l]}) \\
& \hspace{-8pt} |b_{c~(K)}^{[l]} - b_{c~(K - 1)}^{[l]} + \delta|^{1 - \alpha}].
\end{array} \right.
\end{eqnarray}
}
\end{remark}

When convolution layers, pooling layers and fully connected layers are connected end-to-end, it will fulfill some tasks like classification. Many types of functions, such as quadratic function $L=(y-\hat y)^2$ and cross-entropy function $L=-y{\rm log}(\hat y)-(1-y){\rm log}(1-\hat y)$, are available to loss function of proposed method. And the gradient on output of the last layer $\frac{\partial L}{\partial \hat y}$ is still integer order. The backward propagation between layers of different types is simple. Reshaping is needed only when the gradients correspond to different shapes. Finally, a type of complete convolutional neural networks will combine with fractional order gradient to fulfill a task and demonstrate the effectiveness of proposed method. For example, the LeNet \cite{LeCun:1998IEEE} is a simple CNN which will be adopted for subsequent experiment.

\section{Experiments}\label{Section 4}
\begin{figure*}[htb]
  \centering
  \includegraphics[width=0.8\hsize]{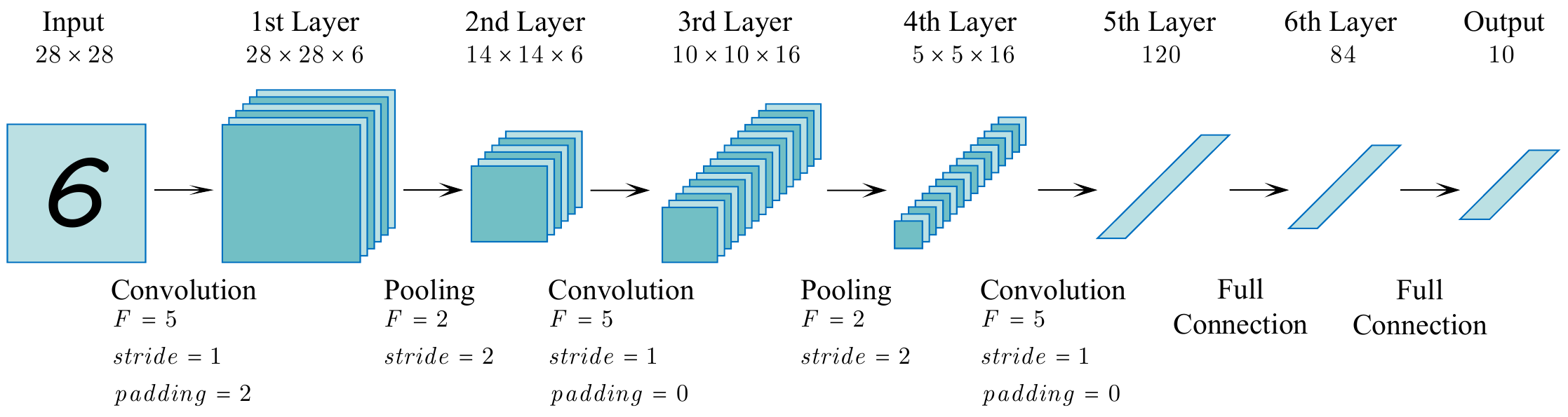}
  \caption{Architecture of LeNet-5 with input of MNIST dataset.}\label{Fig5}
\end{figure*}
The task of experiments is to identify handwriting number by fractional order convolutional neural networks. The experiments are carried out by the MNIST dataset which consists of 60,000 handwritten digit images for the training and another 10,000 samples for testing. Consequently, the whole structure of LeNet is presented in Fig. \ref{Fig5}.

\begin{figure}[htb]
  \centering
  \includegraphics[width=0.8\hsize]{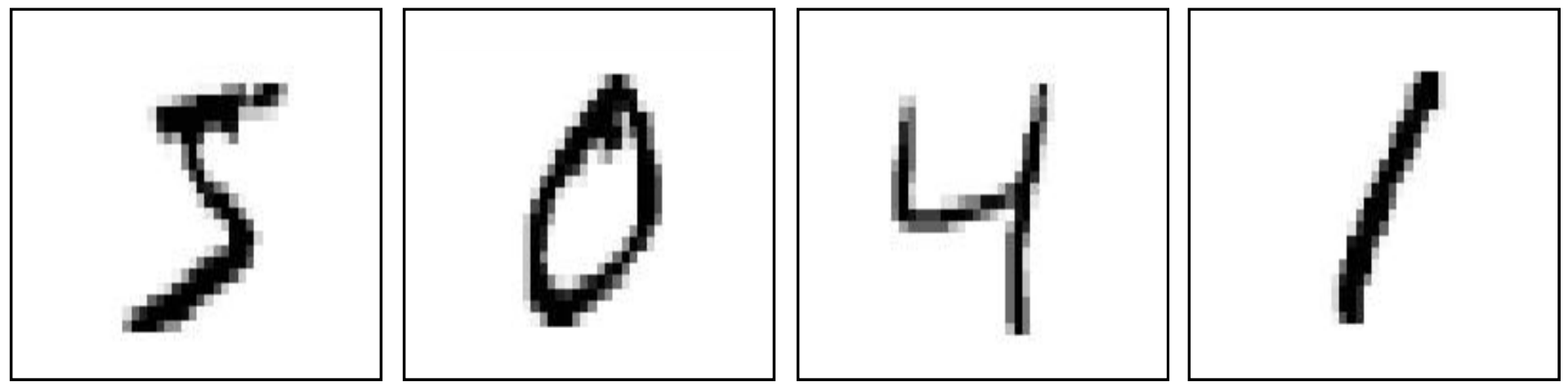}
  \caption{Some samples of MNIST dataset.}\label{Fig6}
\end{figure}

\noindent The corresponding parameters are listed below. ${\hat y}_s \in {\mathbb R}^{10}$ is the output of networks for the $s$-th smaple and $y_s \in {\mathbb R}^{10}$ is label with one-hot form.
\begin{table}[htb]
\renewcommand\arraystretch{1.3}
\centering
\label{Tab1}
\begin{tabular}{ll}
\hline
Loss function: & $L=-\frac{1}{m}\sum\limits_{s = 1}^m y_s^{\rm T}{\rm log}({\hat y}_s)$ \\
Learning rate: & $\mu = 0.1$ \\
Batch size: & $m = 10$ \\
Initial weight: & $w \in [-0.1, 0.1]$ \\
Initial bias: & $b \in [-0.1, 0.1]$ \\
Number of iteration: & $Iteration = 6000$ \\
Number of epoch: & $Epoch = 1$ \\
\hline
\end{tabular}
\end{table}

\noindent The experiments are carried out by 10 times. All parameters, such as weights, bias and the inputting order of samples, are randomly initialized each time. Consequently, the training accuracy and testing accuracy with different fractional order are shown in Table \ref{Tab1}.
\begin{table}[hbt]
\renewcommand\arraystretch{1.3}
\centering
\caption{{\color{b}The average training and testing accuracy.}}\label{Tab1}
\begin{tabular}{ccc}
\hline
\hline
 order($\alpha$) & training accuracy & testing accuracy \\
\hline
 1.9 & 0.0980 & 0.1006 \\
 1.8 & 0.0978 & 0.1009 \\
 1.7 & 0.0978 & 0.1009 \\
 1.6 & 0.0978 & 0.1009 \\
 1.5 & 0.2556 & 0.2581 \\
 1.4 & 0.5924 & 0.5938 \\
 1.3 & 0.8739 & 0.8741 \\
 1.2 & 0.9734 & 0.9728 \\
 1.1 & 0.9813 & 0.9803 \\
 1.0 & 0.9783 & 0.9781 \\
 0.9 & 0.9805 & 0.9799 \\
 0.8 & 0.9780 & 0.9767 \\
 0.7 & 0.9724 & 0.9711 \\
 0.6 & 0.9646 & 0.9637 \\
 0.5 & 0.9498 & 0.9516 \\
 0.4 & 0.9267 & 0.9322 \\
 0.3 & 0.8913 & 0.9004 \\
 0.2 & 0.6671 & 0.6759 \\
 0.1 & 0.3052 & 0.3050 \\
\hline
\hline
\end{tabular}
\end{table}

It could be observed that the accuracy of fractional order gradient methods with $\alpha=0.9$ and $\alpha=1.1$ is higher than the integer order gradient method in most cases. What's more, when average accuracy of 10 experiments is taken into consideration, the integer order one shows a little less accuracy. In Fig. \ref{Fig7}, the average accuracy of training and testing results is drawn over $\alpha=0.1, \cdots, 1.9$.

\begin{figure}[htb]
  \centering
  \includegraphics[width=0.9\hsize]{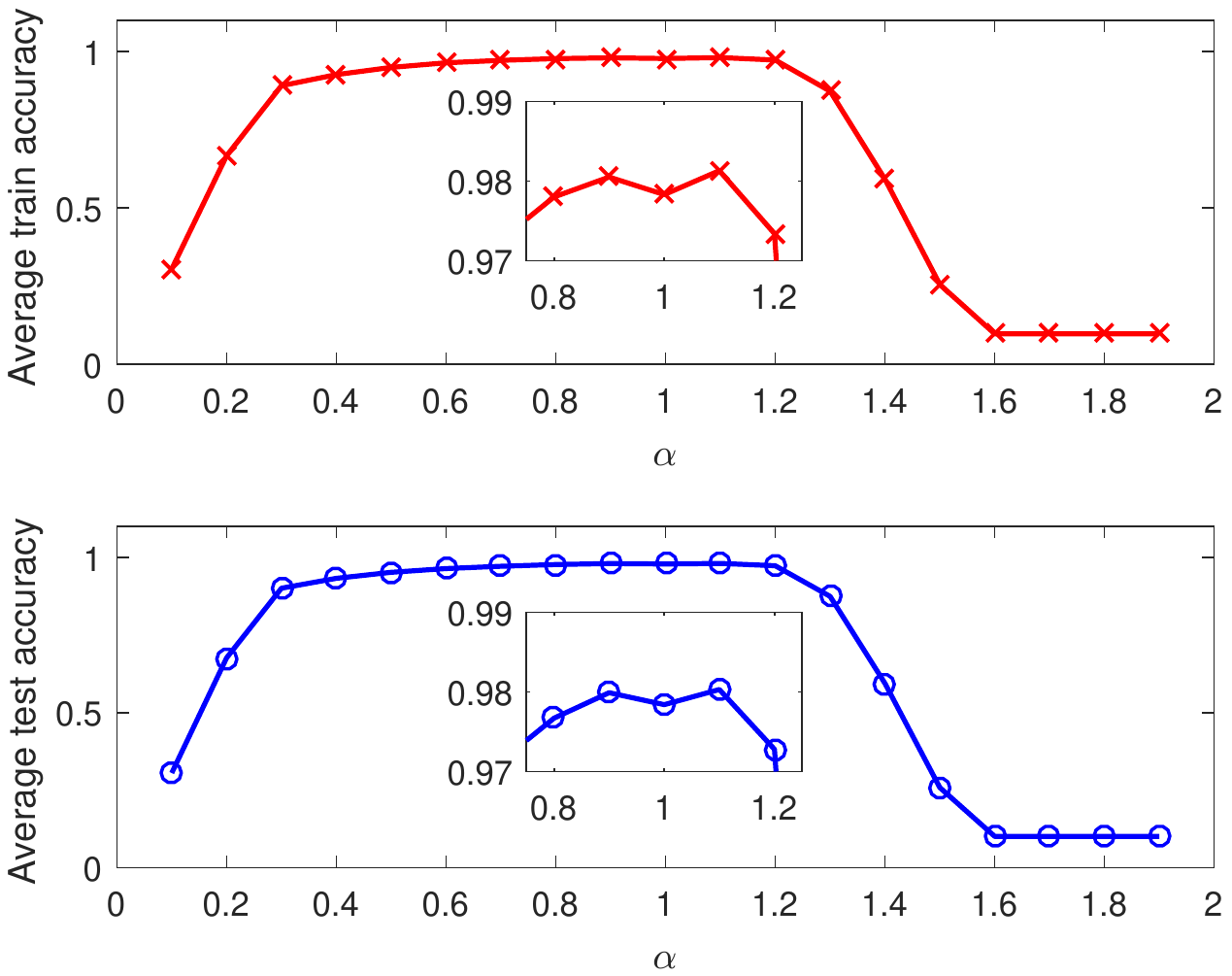}
  \caption{Average accuracy of training and testing results.}\label{Fig7}
\end{figure}

\begin{figure}[htb]
  \centering
  \includegraphics[width=0.9\hsize]{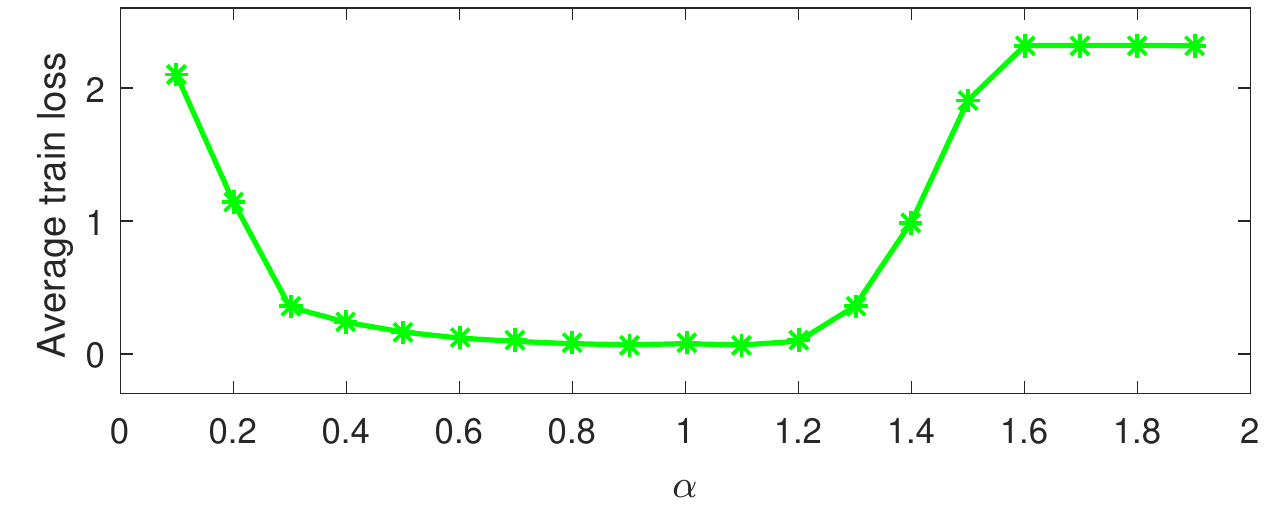}
  \caption{Average loss of training results.}\label{Fig8}
\end{figure}

Although fractional order gradient method works well in CNN, it is not effective enough all the time when $\alpha<0.3$ or $\alpha>1.2$. The reason of such low accuracy is caused by the the Gamma function in fractional order calculus (\ref{Eq2.1}). The Gamma function $\Gamma(2-\alpha)$ in fractional order gradient method  (\ref{Eq3.7}, \ref{Eq3.11}) is a very large number for $\alpha<0.3$ or $\alpha>1.2$. As a result, the gradients are too small to reduce the loss function and sink into a local extreme point quickly. Since it is often a point close to the initial point, the loss does not decrease or only decreases a little bit. This phenomenon is also demonstrated by Fig. \ref{Fig8}.

\begin{figure}
  \centering
  \subfigure[$\alpha = 0.9$]{
  \label{Fig9a}
  \includegraphics[width=1.0\hsize]{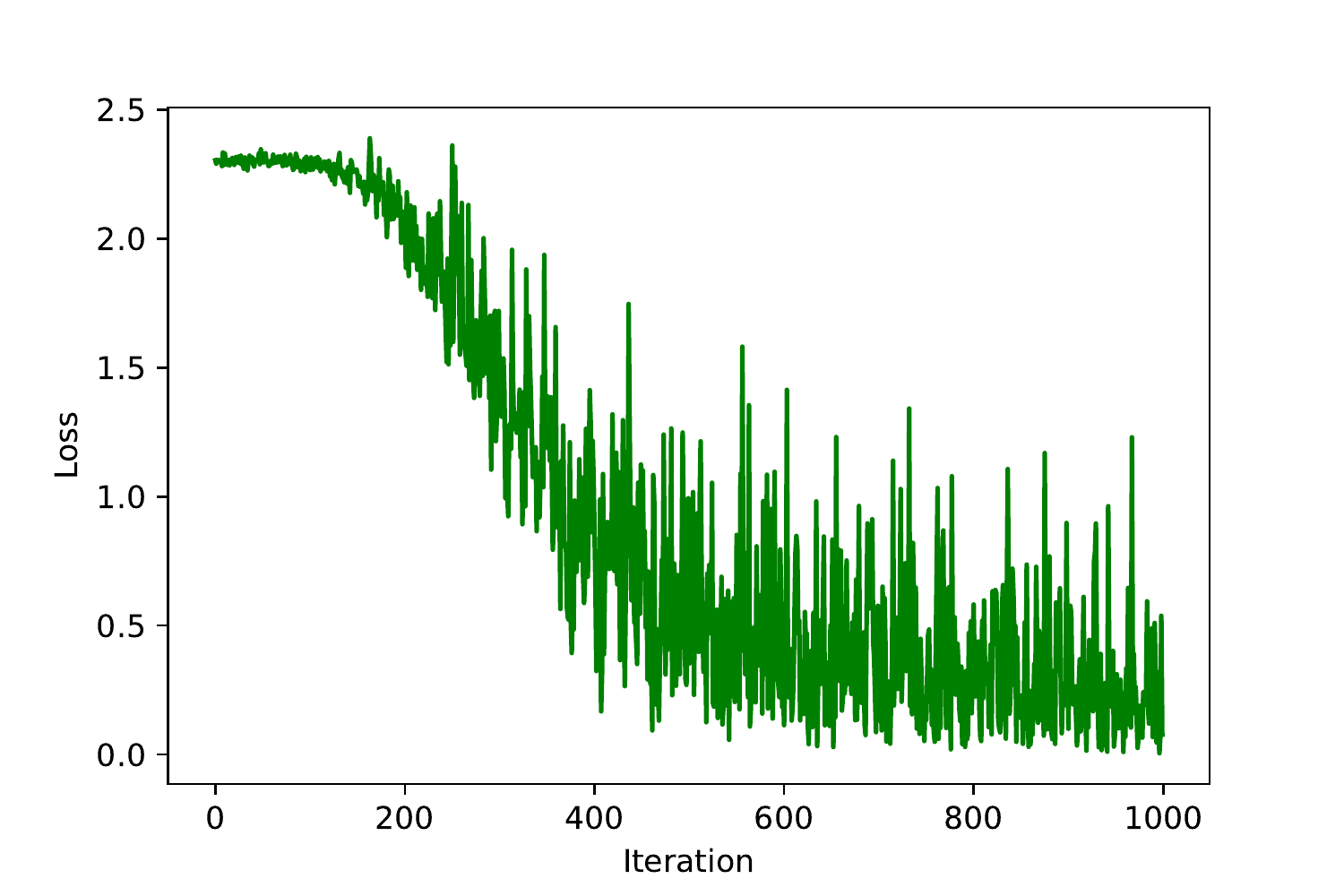}}

  \subfigure[$\alpha = 1.0$]{
  \label{Fig9b}
  \includegraphics[width=1.0\hsize]{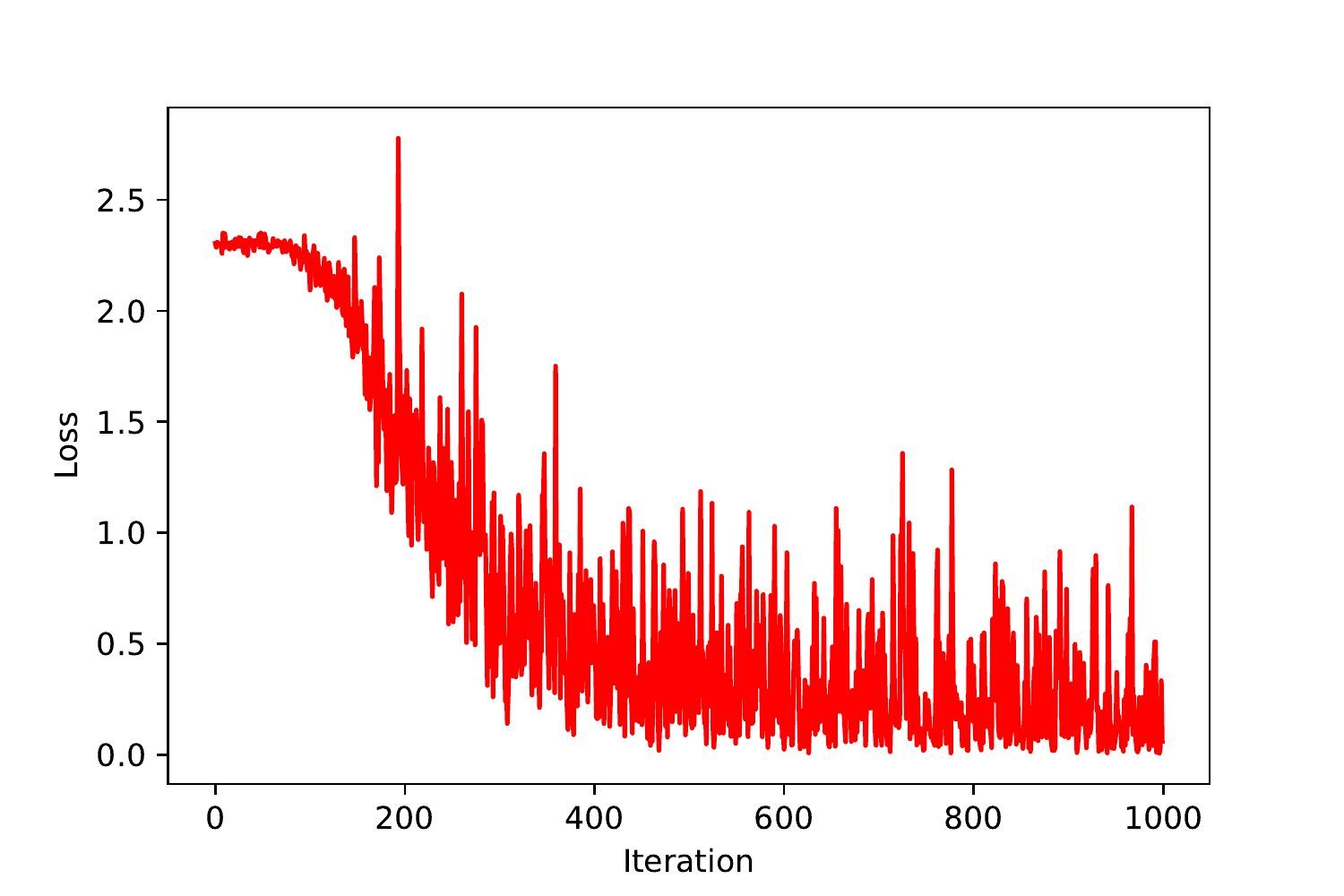}}

  \subfigure[$\alpha = 1.1$]{
  \label{Fig9c}
  \includegraphics[width=1.0\hsize]{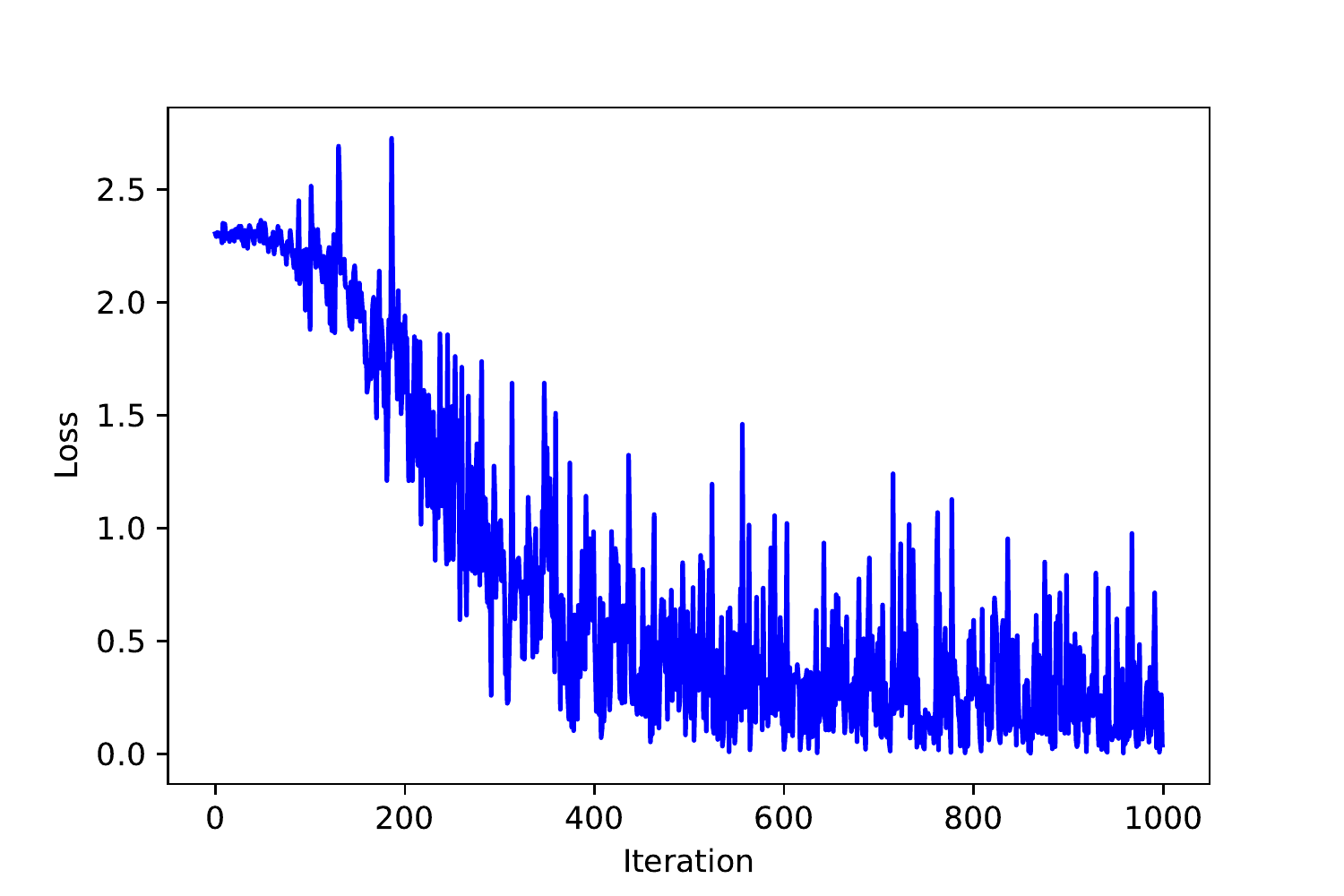}}
  \caption{Loss functions for different order.}
  \label{Fig9} 
\end{figure}

{\color{b} To analyze fractional order gradients further, the Fig. \ref{Fig9} is drawn here to show the average loss function of first 1000 iterations. Even if all loss functions are decreasing during training, the loss iterated by fractional order gradients seems to prefer jumping farther. Compared with integer order gradient method in Fig. \ref{Fig9b}, the points are more dispersed for fractional order cases, especially for $\alpha=0.9$ in Fig. \ref{Fig9a}. In addition, each variance of different loss function is listed below.
\begin{table}[hbt]
\renewcommand\arraystretch{1.3}
\centering
\caption{{\color{b}The variance of loss function.}}\label{Tab2}
\begin{tabular}{cccc}
\hline
\hline
 order($\alpha$) & \hspace{5pt} 0.9 & \hspace{10pt} 1.0 & \hspace{10pt} 1.1 \\
\hline
 variance & \hspace{5pt} 0.65370 & \hspace{10pt} 0.56961 & \hspace{10pt} 0.57697 \\
\hline
\hline
\end{tabular}
\end{table}

\noindent The larger variance often indicates looser distribution, which implies that fractional order gradient method helps optimizing process jump frequently and far. Therefore, it is provided with more possibility to escape the local optimal point.
}

{\color{b}In view of seemingly complicated calculation, it seems that fractional order gradient method needs more time than integer order one.} However, the training speed of fractional order CNN is almost as fast as integer order CNN. Taking all experiments into consideration, the average training time spent by integer order CNN is only 0.53\% less than fractional order CNN with $\alpha=1.1$. Similar speed also exists in other cases for $\alpha=0.1, \cdots, 1.9$. There are two reasons that result in such fast speed of fractional order gradient method for CNN. One reason is that only updating gradients are replaced by fractional order. The other reason is that the fractional order updating gradients are obtained according to integer order gradients and the additional calculation in fractional order updating gradients are quite simple.

\section{Conclusions}\label{Section 5}
The backward propagation of neural networks is investigated by fractional order gradient method in this paper. After modification of fractional order gradient, the proposed gradient method can ensure the convergence to real extreme point, and has been successfully applied in CNN for updating parameters. It is the first time for CNN to cooperate with fractional order calculus. The chain rule {\color{b}in backward propagation }is completely preserved since integer order gradients are still used for transferring between layers. Both the range of fractional order and the type of loss function are enlarged. {\color{b}Moreover, the proposed fractional order gradient method verifies its fast convergence, high accuracy and ability to escape local optimal point in neural networks when compared with integer order case.} It is believed that this paper provides a new way to study gradient method and its application.

\section*{Acknowledgements}
\phantomsection
\addcontentsline{toc}{section}{Acknowledgements}
The work described in this paper was fully supported by the National Natural Science Foundation of China (No. 61573332, No. 61601431), the Fundamental Research Funds for the Central Universities (No. WK2100100028), the Anhui Provincial Natural Science Foundation (No. 1708085QF141) and the General Financial Grant from the China Postdoctoral Science Foundation (No. 2016M602032).

\section*{References}
\phantomsection
\addcontentsline{toc}{section}{References}
\bibliographystyle{model3-num-names}
\bibliography{database}







\end{document}